\documentclass[12pt]{amsart}

\usepackage{graphics, amsmath, amsfonts, amssymb, amsthm, amscd, mathrsfs}

\textwidth = 157mm
\textheight = 239mm
\evensidemargin=0mm
\oddsidemargin=0mm
\hoffset=4mm
\voffset=-25mm
\parskip =0.5mm
\parindent = 6mm
\linespread{1.07}
\pagestyle{plain}

\input{xy}
\xyoption{all}

\newtheorem{theorem}{Theorem}
\newtheorem{proposition}[theorem]{Proposition}
\newtheorem{lemma}[theorem]{Lemma}
\newtheorem{corollary}[theorem]{Corollary}

\newtheorem{introtheorem}{Theorem}

\theoremstyle{definition}

\newtheorem{example}[theorem]{Example}
\newtheorem{remark}[theorem]{Remark}

\newcommand{\A}{{\mathscr A}}
\newcommand{\C}{{\mathbb C}}
\newcommand{\N}{{\mathscr N}}
\renewcommand{\O}{{\mathscr O}}
\newcommand{\R}{{\mathbb R}}
\newcommand{\Z}{{\mathbb Z}}
\newcommand{\Aut}{{\operatorname{Aut}}}
\newcommand{\codim}{{\operatorname{codim}\,}}
\newcommand{\Hom}{{\operatorname{Hom}}}
\newcommand{\inv}{{^{-1}}}
\renewcommand{\sl}{{\operatorname{/\!\!/}}}

\title{An Oka principle for equivariant isomorphisms}

\author{Frank Kutzschebauch, Finnur L\'arusson, Gerald W.~Schwarz}

\address{Frank Kutzschebauch, Institute of Mathematics, University of Bern, Sidlerstrasse 5, CH-3012 Bern, Switzerland}
\email{frank.kutzschebauch@math.unibe.ch}

\address{Finnur L\'arusson, School of Mathematical Sciences, University of Adelaide, Adelaide SA 5005, Australia}
\email{finnur.larusson@adelaide.edu.au}

\address{Gerald W.~Schwarz, Department of Mathematics, Brandeis University, Waltham MA 02454-9110, USA}
\email{schwarz@brandeis.edu}

\subjclass[2010]{Primary 32M05.  Secondary 14L24, 14L30, 32E10, 32M17, 32Q28.}

\keywords{Oka principle, geometric invariant theory, Stein manifold, complex Lie group, reductive group, categorical quotient, Luna stratification, linearisable action, linearisation problem.}

\date{20 March 2013.  Most recent minor changes 14 August 2013}

\thanks{F.~Kutzschebauch was partially supported by Schweizerischer Nationalfond grant 200021-140235/1.  F.~L\'arusson was partially supported by Australian Research Council grant DP120104110.  F.~ L\'arusson and G.~W.~Schwarz would like to thank the University of Bern for hospitality and financial support.}

\begin{document}

\begin{abstract}  
Let $G$ be a reductive complex Lie group acting holomorphically on normal Stein spaces $X$ and $Y$, which are locally $G$-biholomorphic over a common categorical quotient $Q$.  When is there a global $G$-biholomorphism $X\to Y$?

If the actions of $G$ on $X$ and $Y$ are what we, with justification, call generic, we prove that the obstruction to solving this local-to-global problem is topological and provide sufficient conditions for it to vanish.  Our main tool is the equivariant version of Grauert's Oka principle due to Heinzner and Kutzschebauch.  

We prove that $X$ and $Y$ are $G$-biholomorphic if $X$ is $K$-contractible, where $K$ is a maximal compact subgroup of $G$, or if $X$ and $Y$ are smooth and there is a $G$-diffeomorphism $\psi:X\to Y$ over $Q$, which is holomorphic when restricted to each fibre of the quotient map $X\to Q$.  We prove a similar theorem when $\psi$ is only a $G$-homeomorphism, but with an assumption about its action on $G$-finite functions.  When $G$ is abelian, we obtain stronger theorems.  Our results can be interpreted as instances of the Oka principle for sections of the sheaf of $G$-biholomorphisms from $X$ to $Y$ over $Q$.  This sheaf can be badly singular, even for a low-dimensional representation of $\mathrm{SL}_2(\C)$.  

Our work is in part motivated by the linearisation problem for actions on $\C^n$.  It follows from one of our main results that a holomorphic $G$-action on $\C^n$, which is locally $G$-biholomorphic over a common quotient to a generic linear action, is linearisable.
\end{abstract}

\maketitle
\tableofcontents

\section{Introduction}  \label{s:introduction}

\noindent
In renowned work from the late 1950s, Grauert showed that a holomorphic principal $G$-bundle over a Stein space, where $G$ is a complex Lie group, has a holomorphic section if it has a continuous section \cite{Grauert}.  In fact, every continuous section can be deformed to a holomorphic section.  This is an instance of the Oka principle, a metatheorem supported by many actual theorems, saying that on Stein spaces, there are only topological obstructions to solving holomorphic problems that can be cohomologically formulated.  In a seminal paper of 1989, Gromov showed that the structure group is immaterial, so Grauert's theorem holds for any holomorphic fibre bundle whose fibre is a complex Lie group \cite{Gromov}.  And recently, Forstneri\v c has shown that Grauert's theorem holds even more generally for sections of any holomorphic submersion over a Stein space with the structure of a stratified holomorphic fibre bundle with complex Lie groups as fibres \cite{Forstneric2010}.  (We should say that we have not stated the theorems of Grauert, Gromov, and Forstneri\v c in their full strength.)  For more information on the Oka principle, see the monograph \cite{Forstneric2011} and the survey \cite{Forstneric-Larusson}.

In this paper, we prove Oka principles for sections of sheaves of groups that arise naturally in geometric invariant theory and that can be so singular, even for a low-dimensional representation of $\mathrm{SL}_2(\C)$ (Example \ref{e:canonical-bad-example}), that they are not represented by a complex space over the base.  The motivation for our study is to extend the Oka principle to \lq\lq singular bundles\rq\rq\ that lie beyond the reach of the theorems mentioned above, and at the same time to explore applications of the Oka principle in geometric invariant theory.  Also, there is more specific motivation coming from the so-called linearisation problem, which we describe at the end of this section.

For more details on the following, see the next section.  Let $G$ be a reductive complex Lie group.  Let $X$ and $Y$ be normal Stein spaces (always taken to be connected) on which $G$ acts holomorphically.   The categorical quotients $X\sl G$ and $Y\sl G$ are normal Stein spaces.  Assume that there is a biholomorphism $\tau:X\sl G\to Y\sl G$ that locally lifts to $G$-equivariant biholomorphisms between $G$-saturated open subsets of $X$ and $Y$.  We use $\tau$ to identify the quotients, and call the common quotient $Q$ with quotient maps $p:X\to Q$ and $r:Y\to Q$.   Our assumption, then, is that there is an open cover $(U_i)_{i\in I}$ of $Q$ and $G$-equivariant biholomorphisms $\phi_i:p^{-1}(U_i)\to r^{-1}(U_i)$ over $U_i$ (meaning that $\phi_i$ descends to the identity map of $U_i$).  We express the assumption by saying that $X$ and $Y$ are \textit{locally $G$-biholomorphic over a common quotient}.  

We want to conclude that there is a \textit{global} $G$-biholomorphism $X\to Y$.  If $X$ and $Y$ are what we, with justification, call generic (see below), we prove that the obstruction to solving our local-to-global problem is topological and provide sufficient conditions for it to vanish.

If $X$ is smooth, the common quotient $Q$ has a natural stratification, the \textit{Luna stratification}.  We call the corresponding stratified space the \textit{Luna quotient}.  There is a unique open stratum $Q_\mathrm{pr}$, the \textit{principal stratum}, and we set $X_\mathrm{pr}=p^{-1}(Q_\mathrm{pr})$.  If $X$ is only normal, then we
still have a stratification by \textit{isotropy type}.  There is a unique open stratum $Q_\mathrm{pr}$ in this case also, and the two definitions coincide when $X$ is smooth.  We say that $X$ (or the $G$-action on $X$) is \textit{$2$-principal} if $X\setminus X_\mathrm{pr}$ has codimension at least 2 in $X$.  If, in addition, $X_\mathrm{pr}$ consists of closed orbits with trivial stabiliser, we say that $X$ (or the $G$-action on $X$) is \textit{generic}.  In this case, the quotient map $X_\mathrm{pr}\to Q_\mathrm{pr}$ is a principal $G$-bundle \cite[Cor.~5.5]{Snow}.  We justify the term \lq\lq generic\rq\rq\ in Remark \ref{r:justification} below.  Our assumptions on $X$ and $Y$ show that $X$ is generic if and only if $Y$ is generic.

\begin{remark}  \label{r:assumptions}
 If $X$ and $Y$ are smooth and  locally $G$-biholomorphic over a common quotient (in particular, if $X$ and $Y$ are $G$-biholomorphic), then $X$ and $Y$ have isomorphic Luna quotients.  In \cite[Ex.~4.4]{Schwarz2013b}, there is an example of an automorphism of a Luna quotient of a generic $G$-module which does not lift over any neighbourhood of the image of the origin.  Thus an isomorphism of Luna quotients need not lift to a $G$-biholomorphism, even locally.  By slice theory, if the Luna quotients are isomorphic, then there are $G$-biholomorphisms $\phi_i:p^{-1}(U_i)\to r^{-1}(U_i)$ as above, except that $\phi_i$ need not descend to the identity map of $U_i$.  \textit{We do not know whether $X$ and $Y$ having isomorphic Luna quotients implies that they are locally $G$-biholomorphic over a common quotient.}
\end{remark}

Let $\psi_{ij}$ be the $G$-biholomorphism defined as $\phi_i^{-1}\circ\phi_j$ on $p^{-1}(U_i\cap U_j)$.  Then $(\psi_{ij})$ is a cocycle with respect to the open cover $(U_i)$ of $Q$ with coefficients in the sheaf $\A$ of groups of $G$-biholomorphisms of $X$ over $Q$.  There is a $G$-biholomorphism $X\to Y$ over $Q$ if and only if the cocycle splits, so the obstruction to $X$ and $Y$ being $G$-biholomorphic over $Q$ is an element of the cohomology set $H^1(Q, \A)$.

The important consequence of genericity is that the cocycle $(\psi_{ij})$ may be viewed as consisting of $G$-equivariant holomorphic maps $p^{-1}(U_i\cap U_j)\to G$, where $G$ acts on the target $G$ by conjugation (Lemma \ref{l:g-biholomorphisms}).  The cocycle thus defines a holomorphic principal bundle $E$ over $X$ with two commuting actions of $G$, one being part of the principal bundle structure, and the other making the projection $E\to X$ equivariant.  The bundle $E$ with the two $G$-actions is holomorphically trivial if and only if the cocycle splits.

We are now able to apply our fundamental tool, the equivariant version of Grauert's Oka principle due to Heinzner and Kutzschebauch \cite{Heinzner-Kutzschebauch}, which implies that $E$ is holomorphically trivial if it is topologically trivial.  We conclude that in the generic case, the obstruction to $X$ and $Y$ being $G$-biholomorphic is topological (Theorem \ref{t:topological-obstruction}).

Using, among other things, the equivariant version of the theory of universal bundles over classifying spaces, we go on to prove the first main result of the paper (Theorems \ref{t:torus} and \ref{t:K-contractible}).  Let $K$ be a maximal compact subgroup of $G$.  We say that $X$ is $K$-contractible if the identity map of $X$ can be joined to a constant map by a continuous path of $K$-equivariant continuous maps $X\to X$.  The value of the constant map is then a fixed point of the $K$-action on $X$, and hence of the $G$-action as well.
 
\begin{introtheorem}  \label{t:first-main}
Let $G$ be a reductive complex Lie group, and let $K$ be a maximal compact subgroup of $G$.  Let $X$ and $Y$ be normal Stein spaces on which $G$ acts holomorphically and generically, such that $X$ and $Y$ are locally $G$-biholomorphic over a common quotient.  If $X$ is $K$-contractible, then $X$ and $Y$ are $G$-biholomorphic.  If $G$ is abelian and $X$ is smooth, it suffices that $X$ be $\mathbb Z$-acyclic.
\end{introtheorem}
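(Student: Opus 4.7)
The strategy is to combine Theorem \ref{t:topological-obstruction} with equivariant bundle theory. That theorem, invoking the Heinzner--Kutzschebauch equivariant Oka principle, reduces the existence of a $G$-biholomorphism $X\to Y$ over $Q$ to topological triviality of the principal $G$-bundle $E\to X$ built from the cocycle $(\psi_{ij})$, equipped with its second, commuting $G$-action making the projection $E\to X$ equivariant. Since $G$ deformation retracts onto $K$, we may replace the structure group and view $E$ topologically as a $K$-equivariant principal $K$-bundle on the $K$-space $X$.

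First I would invoke the homotopy invariance of equivariant principal bundles: for any $K$-equivariant homotopy $H\colon X\times[0,1]\to X$ and any $K$-equivariant principal $K$-bundle $P\to X$, the pullbacks $H_0^*P$ and $H_1^*P$ are $K$-equivariantly isomorphic. A $K$-contraction of $X$ supplies such a homotopy from $\mathrm{id}_X$ to the constant map at some $K$-fixed point $x_0$. Pulling $E$ back along this $K$-homotopy exhibits $E$ as isomorphic to the pullback, under the constant map $X\to\{x_0\}$, of the restriction $E|_{x_0}$; but any $K$-equivariant $K$-bundle over a point is trivial, so $E$ is topologically trivial in the required sense. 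Theorem \ref{t:topological-obstruction} then produces the desired $G$-biholomorphism $X\to Y$, proving the first assertion. Equivalently, one may phrase the step as saying that the $K$-equivariant map $X\to B_KK$ classifying $E$ is $K$-null-homotopic.

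For the abelian case, conjugation on $G$ is trivial, so the $G$-equivariant cocycle $(\psi_{ij})$ descends to an ordinary holomorphic cocycle on $Q$ with values in $G$. Writing the reductive abelian group as $G\cong(\C^*)^a\times\C^b$, Cartan's Theorem B handles the $\C^b$-factor on the Stein space $Q$, and the exponential sequence reduces the $(\C^*)^a$-factor to the vanishing of $H^2(Q,\Z)^a$. I would then transfer $\Z$-acyclicity from $X$ to $Q$ using the Leray spectral sequence of $p\colon X\to Q$ together with the fact that over the principal stratum $X_\mathrm{pr}\to Q_\mathrm{pr}$ is a principal $G$-bundle with topologically trivial fibre $G\simeq K$ (a torus, up to contractible factors), while the complements $X\setminus X_\mathrm{pr}$ and $Q\setminus Q_\mathrm{pr}$ have codimension at least $2$ and so contribute nothing in the relevant low degrees; smoothness of $X$ makes this codimension and spectral-sequence argument go through cleanly.

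The main obstacle is matching the various notions of triviality: checking that the \emph{topological} triviality of $E$ provided by the argument above is exactly what Theorem \ref{t:topological-obstruction} demands, and, in the abelian case, controlling the passage from cohomology of $X$ to cohomology of $Q$ with enough precision that $\Z$-acyclicity of $X$ forces $H^2(Q,\Z)$ to vanish. The $K$-contractibility case is then essentially immediate, whereas the abelian refinement is the delicate part, requiring that the non-principal locus be irrelevant in cohomological degrees $\leq 2$ and that the torus factors of $K$ be treated via the $\mathrm{Pic}$-type identification $H^1(Q,\O^*)\cong H^2(Q,\Z)$.
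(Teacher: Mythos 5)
Your argument for the $K$-contractible case is essentially the paper's proof of Theorem \ref{t:K-contractible}: homotopy invariance of numerable equivariant bundles \cite[Cor.~2.11]{Lashof} applied to the $K$-contraction, followed by the reduction to topological triviality carried out before Theorem \ref{t:topological-obstruction}. Two caveats. There is no need to reduce the structure group from $G$ to $K$: Lashof's theory, and the Heinzner--Kutzschebauch principle as quoted in the paper, apply directly to $K$-$G$-bundles. More importantly, your assertion that \emph{any} $K$-equivariant bundle over a point is trivial is false: a $K$-$G$-bundle over a $K$-fixed point is classified by a homomorphism $K\to G$ up to conjugacy, and only those conjugate to the inclusion give the trivial bundle. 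What saves the step is that the lifted $G$-action on $E$ is left multiplication in every chart, so the restriction $E|_{x_0}$ is the trivial $K$-$G$-bundle; you need to say this rather than invoke a general fact.

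The abelian case contains two genuine gaps. First, a reductive abelian complex Lie group is an extension $0\to(\C^*)^k\to G\to F\to 0$ with $F$ finite abelian; it has no additive factors $\C^b$ (these are unipotent, hence not reductive), and you have omitted the finite part. That omission is not harmless: the finite quotient contributes a term $H^1(Q,F)$ to the long exact sequence for $0\to\O(\cdot,T)\to\A\to\O(\cdot,F)\to 0$, and killing it requires $H^1(Q,\Z)=0$ in addition to $H^2(Q,\Z)=0$, which is exactly why Theorem \ref{t:torus} needs full $\Z$-acyclicity of $Q$. Second, and more seriously, the Leray spectral sequence of $p:X\to Q$ cannot transfer acyclicity from $X$ to $Q$. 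The fibres of $p$ are not acyclic: over $Q_\mathrm{pr}$ the fibre is $G\simeq(S^1)^k\times F$, and over lower strata it is a twisted null-cone bundle $G\times^H\N(W')$. Concretely, for $G=\C^*$ the Gysin sequence of the principal bundle $X_\mathrm{pr}\to Q_\mathrm{pr}$ shows that when $X$ is $\Z$-acyclic (so $H^1(X_\mathrm{pr})=H^2(X_\mathrm{pr})=0$, the complement having real codimension at least $4$), the cup product with the Euler class gives an isomorphism $H^0(Q_\mathrm{pr},\Z)\to H^2(Q_\mathrm{pr},\Z)$; thus $H^2(Q_\mathrm{pr},\Z)\cong\Z$ is generated by precisely the class you are trying to kill, and it does not vanish. (For the weights $(1,1,-1,-1)$ action on $\C^4$ one gets $Q_\mathrm{pr}=Q\setminus\{0\}$ with $H^2\cong\Z$, while $H^2(Q,\Z)=0$.) So the vanishing must be established on all of $Q$, where the non-principal fibres carry the real content. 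The paper's Proposition \ref{p:acyclic} does this by retracting $X$ $K$-equivariantly onto a Kempf--Ness set $R$ with $R/K\cong X\sl G$ \cite{Heinzner-Huckleberry} and invoking Oliver's solution of the Conner conjecture \cite{Oliver}, with Mann's theorem \cite{Mann} supplying finiteness of orbit types. This deep topological input cannot be replaced by a fibration argument over the principal stratum.
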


Our second main result is the following (Corollary \ref{c:strict-diffeomorphisms}).

\begin{introtheorem}  \label{t:second-main}
Let $G$ be a reductive complex Lie group acting holomorphically and generically on Stein manifolds $X$ and $Y$, which are locally $G$-biholomorphic over a common quotient $Q$.  Suppose there is a $G$-equivariant diffeomorphism $X\to Y$ over $Q$, which is holomorphic when restricted to each fibre of the quotient map.  Then $X$ and $Y$ are $G$-biholomorphic over $Q$.
\end{introtheorem}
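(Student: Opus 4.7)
The plan is to run the machinery already assembled in the paper and verify that the hypothesis on $\psi$ supplies the one missing ingredient, namely topological triviality. By the construction preceding Theorem \ref{t:topological-obstruction}, the cocycle $(\psi_{ij})$ determines a holomorphic principal $G$-bundle $E\to X$ equipped with two commuting $G$-actions, and $X$ and $Y$ are $G$-biholomorphic over $Q$ exactly when $E$ is holomorphically trivial with respect to both actions. The equivariant Oka principle of Heinzner--Kutzschebauch \cite{Heinzner-Kutzschebauch}, which is the fundamental tool invoked in the proof of Theorem \ref{t:topological-obstruction}, says that holomorphic triviality of $E$ is equivalent to topological triviality in the $G$-equivariant continuous category. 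Hence my task is to manufacture a continuous $G$-equivariant trivialisation of $E$ out of the given diffeomorphism $\psi$.

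To do this, I would fix the trivialising cover $(U_i)$ and local lifts $\phi_i:p^{-1}(U_i)\to r^{-1}(U_i)$ given in the setup, and define
\[
s_i \;=\; \phi_i^{-1}\circ\psi \colon p^{-1}(U_i)\longrightarrow p^{-1}(U_i).
\]
Each $s_i$ is a smooth $G$-equivariant diffeomorphism over $U_i$, and it is holomorphic on every fibre of the quotient map $p$, since $\phi_i^{-1}$ is biholomorphic and $\psi$ is fibrewise holomorphic by hypothesis. On overlaps, a direct computation yields
\[
s_i\,s_j^{-1} \;=\; \phi_i^{-1}\,\psi\,\psi^{-1}\,\phi_j \;=\; \phi_i^{-1}\circ\phi_j \;=\; \psi_{ij},
\]
so $(s_i)$ splits the cocycle $(\psi_{ij})$ in the smooth, fibrewise-holomorphic analogue of $\A$. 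Translating through the identification in Lemma \ref{l:g-biholomorphisms}, the holomorphic cocycle $(g_{ij})$ defining $E$ acquires smooth $G$-equivariant maps $\sigma_i:p^{-1}(U_i)\to G$ with $\sigma_i = g_{ij}\cdot\sigma_j$ on overlaps. This is exactly a smooth, a fortiori continuous, $G$-equivariant trivialisation of the principal bundle $E$ over $X$.

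With such a trivialisation in hand, the Heinzner--Kutzschebauch theorem upgrades it to a holomorphic $G$-equivariant trivialisation of $E$, and unwinding the correspondences produces the desired global $G$-biholomorphism $X\to Y$ over $Q$.

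The step requiring the most care is the translation via Lemma \ref{l:g-biholomorphisms}: the lemma is formulated for holomorphic $G$-biholomorphisms, and one has to check that its recipe, applied to the merely smooth but fibrewise-holomorphic $s_i$, still delivers maps $\sigma_i$ of the correct smoothness and $G$-equivariance, defined on all of $p^{-1}(U_i)$ rather than only on its principal part. This is where the fibrewise-holomorphic hypothesis pulls its weight: on the principal locus the construction is automatic from genericity, while off the principal locus (which has codimension at least two by the $2$-principal property, and across which smooth $G$-equivariant fibrewise-holomorphic data extend uniquely) the fibrewise-holomorphic condition is what guarantees that $\sigma_i$ is unambiguously determined and smooth throughout. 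Once this is verified, the rest of the argument is a formal consequence of the setup and the equivariant Oka principle.
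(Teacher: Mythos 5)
Your overall strategy --- reduce to a continuous $K$-equivariant splitting of the cocycle by maps into $G$ and then invoke the Heinzner--Kutzschebauch equivariant Oka principle --- is exactly the framework the paper sets up before Theorem \ref{t:topological-obstruction}. But the step you yourself flag as ``requiring the most care'' is precisely where the argument breaks, and it is the entire content of the theorem. Writing $s_i=\phi_i^{-1}\circ\psi$ as $s_i(x)=\sigma_i(x)\cdot x$ with $\sigma_i:p^{-1}(U_i)\to G$ smooth and equivariant works only over the principal locus, where the action is free and Lemma \ref{l:g-biholomorphisms} (in its smooth incarnation) applies. Off the principal locus two things go wrong. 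First, a $G$-equivariant fibrewise-holomorphic automorphism of a non-principal fibre need not be given by the action of any element of $G$ at all: the group of $G$-automorphisms of a null fibre can be strictly larger than the image of $G$ (in Example \ref{e:canonical-bad-example} it is $\mathrm{GL}_3(\C)$ while the ``special'' automorphisms of that fibre form only the centre of $\mathrm{SL}_2(\C)$), so $\sigma_i$ may simply fail to exist there. Second, even granting existence on a dense open set, your appeal to codimension-$2$ extension is a holomorphic phenomenon (Riemann/Hartogs); smooth or continuous maps do not extend across codimension-$2$ subsets, so there is no reason for the $\sigma_i$ defined on $p^{-1}(U_i)\cap X_{\mathrm{pr}}$ to extend continuously, let alone smoothly, to all of $p^{-1}(U_i)$. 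This is exactly why the paper distinguishes \emph{strict} $G$-diffeomorphisms (your hypothesis) from \emph{special} ones (what your argument needs), and why these notions coincide only over $Q_{\mathrm{pr}}$.

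The paper's actual proof (Theorem \ref{t:strict-diffeomorphisms} and Corollary \ref{c:strict-diffeomorphisms}) supplies the missing content: it \emph{deforms} the strict $G$-diffeomorphism, through strict $G$-diffeomorphisms, to a special one. Locally in slice coordinates $G\times^H B$ this is done by the scaling homotopy $\phi_t=\alpha_t^{-1}\circ\phi\circ\alpha_t$, whose limit $\delta\phi$ is the fibre-derivative along the zero section; fibrewise holomorphy on the null cone (together with $W^H=0$) forces this derivative to be complex-linear and hence of the special form $x\mapsto gg_0g^{-1}\cdot x$. These local deformations are then cut off by bump functions and patched by a stratum-by-stratum induction over the Luna stratification. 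None of this is formal, and your proposal does not replace it. To repair your write-up you would need either a proof that strict implies special globally (which is what the deformation is circumventing, and which the paper's framing strongly suggests is false), or you need to reproduce an argument of the deformation type.
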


In future work we hope to remove the genericity assumption, but this will require different methods.

We would like to interpret Theorems \ref{t:first-main} and \ref{t:second-main} as follows.  Let $\mathscr I$ be the sheaf of sets of $G$-biholomorphisms from $X$ to $Y$ over $Q$.  It is locally isomorphic to the sheaf $\A$, and is an $\A$-torsor, meaning that each stalk of $\A$ acts freely and transitively on the corresponding stalk of $\mathscr I$.  Theorems \ref{t:first-main} and \ref{t:second-main} each provide a sufficient condition for $\mathscr I$ to have a holomorphic section.  Theorem \ref{t:first-main} says that it does if $X$ is $K$-contractible.  Theorem \ref{t:second-main} says that it does if it has a smooth section.

We conjecture that $\mathscr I$ has a holomorphic section if it has a continuous section, but this we cannot prove in general.  We offer two partial results in this direction.  The first (Theorem \ref{t:codim-3}) says that if  %
$X$ is smooth and $G$ is abelian and the complement of the principal stratum $Q_\mathrm{pr}$ has codimension at least 3 in $Q$, then $\mathscr I$ has a holomorphic section over $Q$ if it has a continuous section merely over $Q_\mathrm{pr}$. 

The second result (Theorem \ref{t:strongly}) replaces the diffeomorphism in Theorem \ref{t:second-main} by a homeomorphism satisfying a technical condition described at the beginning of Section \ref{s:continuous}.  Thus, if $\mathscr I$ has a continuous section satisfying this condition, then $\mathscr I$ has a holomorphic section.

We remark that we do not prove Theorem \ref{t:second-main} and Theorem \ref{t:strongly} by deforming the diffeomorphism or homeomorphism in question to a biholomorphism.  Rather, we deform it to a diffeomorphism or homeomorphism of a kind that we call \textit{special}, the existence of which implies the existence of a biholomorphism by the equivariant Oka principle of Heinzner and Kutzschebauch.

We conclude this section with a few words about the linearisation problem.  The groups of holomorphic and algebraic automorphisms of $\C^n$ are infinite-dimensional and quite mysterious for $n\geq 2$ in the holomorphic case and $n\geq 3$ in the algebraic case.  It is of interest to study complex Lie subgroups of these groups, up to conjugacy.  The problem of linearising actions of reductive groups on $\C^n$ has attracted much attention both in the algebraic and holomorphic settings (\cite{Huckleberry}, \cite{ Kraft1996}). 

\smallskip\noindent
\textbf{Linearisation problem.}  Let a reductive complex Lie group $G$ act algebraically (resp.\ holomorphically) on $\C^n$.  Is it conjugate inside the group of algebraic (resp.\ holomorphic) automorphisms of $\C^n$ to a subgroup of $\mathrm{GL}_n(\C)$?  In other words, is there an algebraic (resp.\ holomorphic) change of coordinates on $\C^n$ that makes $G$ act by linear transformations?
\smallskip

The first counterexamples for the algebraic linearisation problem were constructed by Schwarz \cite{Schwarz1989} for $n\geq 4$.  His examples are holomorphically linearisable.  The holomorphic linearisation problem for $\C^*$-actions on $\C^n$, $n\geq 4$, was solved in the negative by Derksen and Kutzschebauch \cite{Derksen-Kutzschebauch}, who constructed holomorphic $\C^*$-actions on $\C^n$ whose stratified categorical quotients are not isomorphic to the stratified quotient of any linear action.  For $n\geq 5$, there are even holomorphically parametrised families of mutually holomorphically inequivalent holomorphic $\C^*$-actions on $\C^n$ \cite{Kutzschebauch-Lodin}.

Our contribution to the linearisation problem is the following consequence of Theorem \ref{t:first-main} (Corollaries \ref{c:first-corollary} and \ref{c:Franks-dream}).

\begin{introtheorem}
Let $X$ be a Stein manifold on which a reductive complex Lie group $G$ acts holomorphically.  If $X$ is locally $G$-biholomorphic over a common quotient to a generic $G$-module $V$, then $X$ is $G$-biholomorphic to $V$.
\end{introtheorem}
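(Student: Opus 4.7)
The plan is to apply Theorem~\ref{t:first-main} with the roles of its two spaces reversed. Write $X$ for the Stein manifold in our statement and $V$ for the generic $G$-module. I will invoke Theorem~\ref{t:first-main} with its ``$X$'' replaced by $V$ and its ``$Y$'' replaced by $X$. The reason for the reversal is that the $K$-contractibility hypothesis, which is not obviously available for an arbitrary Stein manifold, is immediate for a $G$-module.

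Checking the hypotheses in this swapped form: $V$ is a finite-dimensional complex vector space, hence normal and Stein, and $X$ is Stein by assumption. The action of $G$ on $V$ is holomorphic and generic by hypothesis; and since the relation of being locally $G$-biholomorphic over a common quotient is symmetric and preserves genericity (as observed in the paragraph just before Remark~\ref{r:assumptions}), the action on $X$ is also generic. Finally, $V$ is $K$-contractible: the map $H \colon V \times [0,1] \to V$, $H(v,t) = tv$, is $\C$-linear in $v$, hence $G$-equivariant and \emph{a fortiori} $K$-equivariant; it is continuous, equal to the identity at $t = 1$, and equal to the constant map at the $G$-fixed point $0 \in V$ at $t = 0$.

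Theorem~\ref{t:first-main} then yields a $G$-biholomorphism between $V$ and $X$, as required. The ``main obstacle'' here is really just the conceptual point of reading Theorem~\ref{t:first-main} in the correct direction: the $K$-contractibility requirement must be imposed on the $G$-module rather than on the unknown Stein manifold $X$. Once this is noted, the proof is essentially immediate, as all the substantial content has been absorbed into Theorem~\ref{t:first-main}. I would also expect the second assertion advertised in the introduction, concerning $G$-actions on $\C^n$, to follow by the same route, with $X = \C^n$ taking the place of the Stein manifold and using that $\C^n$ is itself a complex vector space (so in fact $K$-contractibility is available on both sides in that special case).
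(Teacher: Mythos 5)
Your proposal is correct and matches the paper's own route: the paper derives this statement as Corollary \ref{c:first-corollary} of Theorem \ref{t:K-contractible}, using precisely the observation that a $G$-module is $K$-contractible (via the linear homotopy $v\mapsto tv$) and that genericity transfers to $X$ because the two spaces are locally $G$-biholomorphic over a common quotient. The only cosmetic difference is that the paper states Theorem \ref{t:K-contractible} symmetrically enough that no explicit ``role reversal'' needs to be flagged, but the substance is identical.
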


Note that the theorem gives a sufficient condition for a Stein manifold to be biholomorphic to $\C^n$.

\smallskip\noindent
\textit{Acknowledgement.}  We thank G.~Tomassini for pointing us to the reference \cite{Banica-Stanasila}, which helped us complete the proof of Theorem \ref{t:codim-3}.  We also thank the referees for valuable comments that helped us improve the exposition.

%%%%%%%%%%

\section{Generic actions}  \label{s:generic}

We start with some background.  For more information, see \cite{Luna} and \cite[Sec.~6]{Snow}.  Let $X$ be a normal Stein space with a holomorphic action of a reductive complex Lie group $G$.  The categorical quotient $Q=X\sl G$ of $X$ by the action of $G$ is the set of closed orbits in $X$ with a reduced Stein structure that makes the quotient map $p:X\to Q$ the universal $G$-invariant holomorphic map from $X$ to a Stein space.  Since $X$ is normal, $Q$ is normal.  If $U$ is an open subset of $Q$, then $\O_X(p^{-1}(U))^G \cong \O_Q(U)$. 
If $X$ is a $G$-module, then $Q$ is just the complex space corresponding to the affine algebraic variety with coordinate ring  $\O_\mathrm{alg}(X)^G$.  We say that a subset of $X$ is \textit{$G$-saturated\/} if it is a union of fibres of $p$.  If $Gx$ is a closed orbit, then the stabiliser (or isotropy group) $G_x$ is reductive.  We say that closed orbits $Gx$ and $G{x'}$ have the same \textit{isotropy type} if $G_x$ is $G$-conjugate to $G_{x'}$. Thus we get the \textit{isotropy type  stratification} of $Q$ with strata whose labels are  conjugacy classes of reductive subgroups of $G$.

Let $H$ be a reductive subgroup of $G$ and let $Z$ be a normal Stein $H$-space. Define $G\times^H Z$ to be the orbit space of the free $H$-action on $G \times Z$ given by $h\cdot (g,z) = (g h^{-1} , h z)$.  We denote the $H$-orbit of $(g,z)$ by $[g,z]$. Since $G\times Z$ is Stein and normal, so is $G\times^H Z$. Note that $G$ acts naturally on $G\times^HZ$ on the left.

Let $q\in Q$ and take a point $x$ in the unique closed $G$-orbit in $p^{-1}(q)$.  Set $H=G_x$.  The \textit{slice theorem} states that there is a locally closed $H$-saturated Stein subvariety $S$ of $X$ (the \textit{slice}) containing $x$ such that $G\times^H S\to X$, $[g,s]\mapsto gs$, is a $G$-biholomorphism onto a $G$-saturated neighbourhood of $x$ in $X$.  If $x$ is a smooth point of $X$, the $H$-module  $T_x X / T_x(Gx)$ is called the \textit{slice representation} at $x$.  In this case, we may assume that the slice $S$ is $H$-biholomorphic to an $H$-saturated neighbourhood of the origin $0$ in $T_x X / T_x(Gx)$, with $0$ corresponding to $x$.  Since $H$ is reductive, we can identify $T_x X / T_x(Gx)$ with an $H$-stable complement $W$ to $T_x(Gx)$ in $T_x X$.  Write the $H$-module $W$ as $W^H \oplus W'$.  We may choose the slice $S$ to be $H$-biholomorphic to $B_1\times B_2$, where $0\in B_1\subset W^H$ and $0\in B_2\subset W'$.  Then $B_1$ maps biholomorphically onto a neighbourhood $U$ of $p(x)$ in the stratum through $p(x)$.  Thus $x$ has a $G$-saturated neighbourhood biholomorphic to $U\times(G\times^H B_2)$, and $p:p^{-1}(U)\to U$ is a trivial bundle with fibre $G\times^H \N(W')$, where $\N(W')$ is the null cone of $W'$, that is, the union of the $H$-orbits whose closure contains~$0$.

Assume that $X$ is smooth and that $G_x=gG_{x'}g^{-1}=G_{gx'}$, where $Gx$ and $Gx'$ are closed.  We say that $Gx$ and $Gx'$ have the same \textit{slice type} if the slice representations of $G_x$ at $x$ and $gx'$ are isomorphic.   Declaring points in $Q$ to be equivalent if the closed orbits above them have the same slice type defines a holomorphic stratification of $Q$, called the \textit{Luna stratification}.  The strata are smooth locally closed subvarieties of $Q$.  We call $Q$, viewed as a stratified complex space with each stratum labelled by the isomorphism class of the corresponding slice representation, the \textit{Luna quotient} of $X$ by the action of $G$.  An isomorphism of Luna quotients is a biholomorphism respecting the additional structure.  Precisely one Luna stratum is open in $Q$: this is the \textit{principal stratum} $Q_\mathrm{pr}$.  The corresponding stabiliser, well defined up to conjugation in $G$, is called the \textit{principal stabiliser}.  The closed orbits above the principal stratum are called \textit{principal orbits}.  The stratification of $Q$ by isotropy type  is coarser than the Luna stratification, but the connected components of the strata in the two stratifications are the same.  Both stratifications are locally finite, meaning that each point of $Q$ has a neighbourhood that intersects only finitely many strata.  The isotropy type stratification exists even if $X$ is not smooth, and then there is still an open and dense principal stratum $Q_\mathrm{pr}$.

Let $X$ and $Y$ be normal Stein spaces (always assumed connected) on which a reductive complex Lie group $G$ acts holomorphically, such that $X$ and $Y$ are locally $G$-biholomorphic over a common quotient $Q$.  Call the quotient maps $p:X\to Q$ and $r:Y\to Q$.  As in the introduction, there is an open cover $(U_i)$ of $Q$ and a cocycle $(\psi_{ij})$ with coefficients in the sheaf $\A$ of groups of $G$-biholomorphisms of $X$ over $Q$.  The obstruction to $X$ and $Y$ being $G$-biholomorphic is the corresponding element in $H^1(Q,\A)$.

\begin{example}  \label{e:obstruction}
Let $Q$ be a Stein manifold with $H^2(Q,\Z)\neq 0$, and let $L$ and $M$ be non-isomorphic holomorphic line bundles on $Q$.  Let $X$ and $Y$ be the manifolds obtained from $L$ and $M$, respectively, by removing the zero sections.  Vector bundles over Stein manifolds are Stein, and the complement of a hypersurface in a Stein manifold is Stein, so $X$ and $Y$ are Stein.  The actions of $\C^*$ on $L$ and $M$ by scalar multiplication in each fibre restrict to actions on $X$ and $Y$.  The actions are obviously free and generic.  The categorical quotient of both $X$ and $Y$ is $Q$ with a trivial stratification.  Clearly, $X$ and $Y$ are locally $G$-biholomorphic over $Q$.  It is easily seen that a $\C^*$-equivariant biholomorphism $X\to Y$ over $Q$ would extend to an isomorphism $L\to M$.  Conversely, an isomorphism $L\to M$ restricts to a $\C^*$-equivariant biholomorphism $X\to Y$ over $Q$.  Here, the sheaf $\A$ of $G$-biholomorphisms of $X$ over $Q$ is simply the sheaf $\O^*$ of nowhere-vanishing holomorphic functions.  Indeed, the obstruction to $X$ and $Y$ being $G$-biholomorphic over $Q$ is given by the element of $H^1(Q,\A)\cong H^2(Q,\Z)$ represented by $L\otimes M^*$.
\end{example}

So as not to break the flow of this section, we have postponed to the next section an example showing that the sheaf $\A$ can be badly behaved in that it need not be represented by a complex space.

In the introduction, we defined what it means for the action of $G$ on a normal Stein space $X$ to be generic and 2-principal.  The action of $G$ on $X$ is said to be \textit{stable} if there is a nonempty open subset of $X$ consisting of closed orbits; equivalently, $X_\mathrm{pr}=p^{-1}(Q_\mathrm{pr})$ consists of closed orbits.  We can reduce the stable 2-principal case to the generic case as follows.

\begin{proposition}
Let $X$ be stable and 2-principal and let $H$ be a principal stabiliser.  Let $X_H$ be the union of the irreducible components of the $H$-fixed point set $X^H$ which intersect $X_\mathrm{pr}$.  Then there is a $G$-equivariant biholomorphism
\[ \phi:G\times^{N_G(H)} X_H \to X,\quad [g,x]\mapsto gx. \]
Moreover, $X_H$ is normal and the action of $N_G(H)/H$ on $X_H$ is generic.

\end{proposition}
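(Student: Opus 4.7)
The plan is to verify $\phi$ is a $G$-equivariant biholomorphism by first handling the principal stratum and then extending via the 2-principal hypothesis; the genericity of the $N_G(H)/H$-action on $X_H$ will then follow as bookkeeping. This is essentially the holomorphic analogue of the Luna--Richardson theorem. As setup, $N_G(H)$ preserves $X^H$ (since it normalises $H$) and preserves $X_\mathrm{pr}$ (since $X_\mathrm{pr}$ is $G$-stable), hence preserves $X_H$; and $X_H$ is normal because the fixed-point set of a reductive group acting on a normal Stein space is normal, by the slice theorem. Well-definedness, holomorphy, and $G$-equivariance of $\phi$ are routine.

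The key elementary input is that a subgroup of $G$ conjugate to $H$ and containing $H$ must equal $H$: from $H \subseteq gHg^{-1}$ one obtains an ascending chain of reductive subgroups $H \subseteq gHg^{-1} \subseteq g^2Hg^{-2} \subseteq \cdots$, which stabilises. Applied at a point $x \in X_H \cap X_\mathrm{pr}$, whose $G$-stabiliser is both conjugate to $H$ (by principality) and contains $H$ (since $x \in X^H$), this yields $G_x = H$. From here: (i) surjectivity of $\phi$ onto $X_\mathrm{pr}$ holds because for $y \in X_\mathrm{pr}$ with $G_y = gHg^{-1}$ the point $g^{-1}y$ lies in $X^H \cap X_\mathrm{pr} \subseteq X_H$ and $\phi([g, g^{-1}y]) = y$; (ii) injectivity on $\phi^{-1}(X_\mathrm{pr})$ follows because if $gx = g'x'$ with $x, x' \in X_H \cap X_\mathrm{pr}$, then $n = g'^{-1}g$ sends $x$ to $x'$ and conjugates $H = G_x$ onto $H = G_{x'}$, so $n \in N_G(H)$ and $[g,x] = [g',x']$; (iii) near $x \in X_H \cap X_\mathrm{pr}$, Luna's slice theorem realises a $G$-saturated neighbourhood of $x$ as $G \times^H S$ with $X_H$ corresponding locally to the component of $S^H$ through $x$, from which $\phi$ is a local biholomorphism.

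To extend $\phi$ to all of $X$: the 2-principal hypothesis says the codimension of $X \setminus X_\mathrm{pr}$ in $X$ is at least $2$, and a dimension count using the $G$-twisted product structure transfers this bound to $X_H$ and to $G \times^{N_G(H)} X_H$. Since both source and target of $\phi$ are normal Stein spaces and $\phi$ is a biholomorphism on a dense open whose complement has codimension at least $2$, it extends to a global biholomorphism by the Riemann extension theorem for holomorphic maps between normal Stein spaces. With $\phi$ established as a biholomorphism, genericity of $N_G(H)/H$ on $X_H$ reads off: the $N_G(H)$-stabiliser of $x \in X_H \cap X_\mathrm{pr}$ is $N_G(H) \cap H = H$, trivial modulo $H$; the orbit $N_G(H) \cdot x$ equals $Gx \cap X^H$ by the same conjugation argument, and is closed in $X_H$ because $Gx$ is closed in $X$ by stability; and the 2-principal codimension bound was already established.

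The main obstacle is the global extension step: one must promote a biholomorphism over a dense open to one across the non-principal locus where the target may be singular. A cleaner alternative is to verify surjectivity, injectivity, and local biholomorphy of $\phi$ at every closed orbit simultaneously by applying Luna's slice theorem: at a closed orbit $Gy \subseteq X$ with $K = G_y$ containing $H$ after conjugating, model a neighbourhood as $G \times^K S$, identify the component of $X_H$ through $y$ locally with $S^H$, and check that $\phi$ realises the corresponding piece of $G \times^{N_G(H)} X_H$ as $G \times^K S$ near $[e,y]$. This bypasses the codimension extension but transfers the real technical work to a careful slice analysis along the non-principal orbits.
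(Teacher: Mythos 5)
Your overall strategy coincides with the paper's: establish that $\phi$ restricts to a biholomorphism over the principal stratum (using exactly the conjugation argument you give, that a subgroup conjugate to $H$ and containing $H$ equals $H$), and then use $2$-principality and normality to extend across the complement. The principal-stratum analysis in your points (i)--(iii) is correct. However, two of your inputs to the extension step are not available at the point where you invoke them, and one of them is justified by a false general principle.

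First, you assert at the outset that $X_H$ is normal ``because the fixed-point set of a reductive group acting on a normal Stein space is normal, by the slice theorem.'' This is not a theorem: the slice theorem only reduces the question to the fixed-point set of $H$ in a slice $S$, and when $X$ (hence $S$) is merely normal, $S^H$ need not be normal. (For instance, $\C^*$ acting on the normal quadric cone $\{xy=zw\}\subset\C^4$ with weights $1,-1,0,0$ has fixed-point set $\{zw=0\}$ in the plane $x=y=0$, two crossing lines.) Second, your ``dimension count'' transferring the codimension-$2$ bound from $X\setminus X_\mathrm{pr}$ to $X_H\setminus (X_H)_\mathrm{pr}$ and to $G\times^{N_G(H)}X_H$ does not work a priori: a codimension-$2$ subset of $X$ can meet the much smaller set $X_H$ in codimension $1$, and the only reason it does not is the twisted-product structure you are in the middle of proving. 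Both difficulties disappear if you reorder the argument as the paper does: $\phi$ is already globally defined and holomorphic, so one only needs to extend $\phi_\mathrm{pr}^{-1}$ from $X_\mathrm{pr}$ to $X$, which uses nothing beyond the normality of $X$ and the hypothesis $\codim_X(X\setminus X_\mathrm{pr})\geq 2$ (together with the density of $X'_\mathrm{pr}$ in $X'$, which holds because every component of $X_H$ meets the open set $X_\mathrm{pr}$, and reducedness of $X'$ to identify the compositions with the identity). The normality of $X_H$ and the bound $\codim_{X_H}\bigl(X_H\setminus (X_H)_\mathrm{pr}\bigr)\geq 2$ are then read off from the resulting isomorphism $G\times^{N_G(H)}X_H\cong X$ and its bundle structure over $G/N_G(H)$; they are conclusions, not inputs. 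Your closing ``cleaner alternative'' via slices at non-principal closed orbits is not carried out, so it does not repair these steps.
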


Here, $N_G(H)$ denotes the normaliser of $H$ in $G$.  Without the assumption that $X$ is stable and 2-principal, $X\sl G$ is biholomorphic to $X_H\sl N_G(H)$ \cite{Luna-Richardson}.  See also \cite[Thm.~7.5]{Schwarz1995}.

\begin{proof}
Clearly $X_H$ is $N_G(H)$-stable.  Set $X'=G\times^{N_G(H)} X_H$.    Suppose that $x\in X_H\cap X_\mathrm{pr}$ and $gx \in X_H$.  Then the stabiliser $G_{gx}$ contains $H$ and is conjugate to $H$, so $G_{gx}=H$.  But $G_{gx}=gG_xg^{-1}=gHg^{-1}$, so $g\in N_G(H)$.  Hence every principal orbit intersects $X_H$ in an $N_G(H)$-orbit with stabiliser $H$, and 
\[ Gx= G\times^{N_G(H)}(Gx\cap X_H).\]  
By construction then, $\phi$ induces a biholomorphism $\phi_\mathrm{pr}: X'_\mathrm{pr} \to X_\mathrm{pr}$, and $X'_\mathrm{pr}$ is open and dense in $X'$ since $X_\mathrm{pr}\cap X_H$ is dense in $X_H$.  Since $\codim X\setminus X_\mathrm{pr}\geq 2$, the inverse of $\phi_\mathrm{pr}$ extends to be holomorphic on $X$.  Hence $\phi$ is a biholomorphism.  Observe that the Luna quotient of $X_H$ by $N_G(H)/H$ is determined by that of $X$ by $G$, and vice versa.  Since $X'$ is a bundle over $G/N_G(H)$, if $\codim X_H\setminus (X_H)_\mathrm{pr}<2$, then $\codim X'\setminus (X')_\mathrm{pr}<2$: a contradiction.  Hence $\codim X_H\setminus (X_H)_\mathrm{pr} \geq 2$.  Finally, $X_H$ is normal since $X'\cong X$ is normal.
\end{proof}

\begin{corollary}
Let $X$ and $Y$ be locally $G$-biholomorphic over a common quotient and let $H$ be a principal stabiliser.  Suppose that $X$ is stable and 2-principal.  Then $Y$ is stable and 2-principal.  Let $X'$, $Y'$, $X_H$, and $Y_H$ be as above.  Then $X$ is $G$-biholomorphic to $Y$ if and only if $X'$ is $G$-biholomorphic to $Y'$ if and only if $X_H$ is $N_G(H)/H$-biholomorphic to $Y_H$.
\end{corollary}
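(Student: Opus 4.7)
First I would verify that $Y$ is stable and 2-principal: since $X$ and $Y$ are locally $G$-biholomorphic over $Q$, the local biholomorphisms $\phi_i:p\inv(U_i)\to r\inv(U_i)$ transport the isotropy-type stratification and the fibre structure from $X$ to $Y$, so $Y_\mathrm{pr}=r\inv(Q_\mathrm{pr})$ consists of closed orbits and has complement of codimension at least $2$ in $Y$, and $Y$ has principal stabiliser conjugate to $H$. Thus the preceding proposition applies to $Y$ as well and yields a $G$-biholomorphism $Y\cong_G Y'$; similarly $X\cong_G X'$. The first equivalence $X\cong_G Y\iff X'\cong_G Y'$ is then tautological, obtained by composition with these two fixed $G$-biholomorphisms.

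For the second equivalence, I would invoke the standard correspondence between $N_G(H)$-spaces on which $H$ acts trivially and $G$-spaces of the form $G\times^{N_G(H)}(\cdot)$. Since $X_H\subseteq X^H$ and $Y_H\subseteq Y^H$, the subgroup $H$ acts trivially on both, so $N_G(H)/H$-biholomorphisms $f:X_H\to Y_H$ coincide with $N_G(H)$-biholomorphisms; any such $f$ lifts via $[g,x]\mapsto[g,f(x)]$ to a well-defined $G$-biholomorphism $X'\to Y'$. Conversely, a $G$-biholomorphism $\Phi:X'\to Y'$ should restrict to a biholomorphism between the embedded slices $\{[e,x]:x\in X_H\}\subset X'$ and $\{[e,y]:y\in Y_H\}\subset Y'$, and this restriction is automatically $N_G(H)$-equivariant because both slices are $N_G(H)$-stable and $\Phi$ is $G$-equivariant.

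The main technical point I anticipate is the intrinsic characterisation of the embedded slice inside $X'$: to conclude that $\Phi$ preserves it, I must show that $\{[e,x]:x\in X_H\}$ coincides with the union of those irreducible components of the $H$-fixed point set $(X')^H$ that meet the principal stratum $X'_\mathrm{pr}$. This should follow from the explicit form of the biholomorphism $\phi$ in the preceding proposition: on the principal locus the isotropy group of every point of $X^H$ equals $H$, which forces any representative $g$ of a class $[g,x]\in(X')^H\cap X'_\mathrm{pr}$ to lie in $N_G(H)$, and hence $[g,x]=[e,gx]$ to lie in the slice. Once this is established, any $G$-biholomorphism $X'\to Y'$ preserves both $(X')^H$ and $X'_\mathrm{pr}$ and therefore carries the slice of $X'$ onto the slice of $Y'$, completing the reverse direction.
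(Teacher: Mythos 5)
Your proposal is correct and is essentially the intended argument: the paper states this corollary without proof as an immediate consequence of the preceding proposition, and your derivation --- transporting stability and 2-principality through the local biholomorphisms, using $\phi$ to identify $X\cong_G X'$ and $Y\cong_G Y'$, and characterising the embedded copy of $X_H$ intrinsically as the union of irreducible components of $(X')^H$ meeting the principal stratum --- is exactly how one fills in the omitted details. The key point you single out, that a $G$-biholomorphism preserves both $H$-fixed-point sets and principal strata (since stabilisers are preserved on the nose) and hence carries $X_H$ onto $Y_H$ equivariantly for $N_G(H)$, is the same observation already used in the proposition's proof.
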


Using the results above, one can prove versions of our main theorems with the hypothesis of genericity replaced by the assumption that the actions are 2-principal and stable.  We leave this as an exercise for the reader.

\begin{remark}  \label{r:justification}
Our use of the term \textit{generic} is rigorously justified in the case of $G$-modules.  If $G$ is simple, then, up to isomorphism, all but finitely many $G$-modules $V$ with $V^G=0$ are 2-principal and stable \cite[Cor.~11.6 (1)]{Schwarz1995}.  There is a similar result for semisimple groups \cite[Cor.~11.6 (2)]{Schwarz1995}.  If a $G$-module $V$ is 2-principal and stable, then the principal stabiliser $H$ is the kernel of the action of $G$ on $V$, so by replacing $G$ by $G/H$, we may assume that the principal stabiliser is trivial \cite[Rem.~2.6]{Schwarz2013}.  A \lq\lq random\rq\rq\ $\C^*$-module is generic, although infinite families of counterexamples exist.  More precisely, a faithful $n$-dimensional $\C^*$-representation without zero weights is generic if and only if it has at least two positive weights and at least two negative weights and any $n-1$ weights are coprime.  As justification for our use of the term \textit{generic} for arbitrary Stein $G$-manifolds $X$, we note that $X$ is generic if and only if each of its slice representations is generic.
\end{remark}

For us, the important consequence of $X$ being generic is that sections of $\A$ over an open subset $U$ of $Q$ may be identified with $G$-equivariant holomorphic maps $p^{-1}(U)\to G$, where $G$ acts on the target $G$ by conjugation: $h\cdot g=hgh^{-1}$.

\begin{lemma}  \label{l:g-biholomorphisms}
Let $X$ be a normal Stein space with a generic holomorphic action of a reductive complex Lie group $G$.  Let $p:X\to Q$ be the categorical quotient map.  Let $U$ be an open subset of $Q$.  There is a bijective correspondence between $G$-equivariant biholomorphisms $\psi$ of $p^{-1}(U)$ over $U$ and $G$-equivariant holomorphic maps $\gamma:p^{-1}(U)\to G$, where $G$ acts on itself by conjugation, given by $\psi(x)=\gamma(x)x$.
\end{lemma}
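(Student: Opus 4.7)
The plan is to build the correspondence first on the principal stratum, where the $G$-action is free and the situation reduces to standard principal-bundle geometry, and then to extend holomorphically across the codimension-two non-principal locus.

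Given $\psi$, I work first on $X_\mathrm{pr}\cap p\inv(U)$. Genericity makes $X_\mathrm{pr}\to Q_\mathrm{pr}$ a principal $G$-bundle, so the map $G\times X_\mathrm{pr}\to X_\mathrm{pr}\times_{Q_\mathrm{pr}} X_\mathrm{pr}$, $(g,x)\mapsto(x,gx)$, is a biholomorphism. Since $\psi$ lies over $U$, the map $x\mapsto(x,\psi(x))$ takes values in the fibre product; composing with the inverse of the biholomorphism above produces a holomorphic map $\gamma:X_\mathrm{pr}\cap p\inv(U)\to G$ with $\psi(x)=\gamma(x)x$, and $\gamma(x)$ is unique because stabilisers are trivial on $X_\mathrm{pr}$. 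Cancelling $x$ in the identity $\gamma(hx)\,hx=\psi(hx)=h\psi(x)=h\gamma(x)x$ yields the conjugation equivariance $\gamma(hx)=h\gamma(x)h\inv$.

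Next I extend $\gamma$ to $p\inv(U)$. Because $X$ is normal and $X\setminus X_\mathrm{pr}$ has codimension at least two in $X$, the same holds for $p\inv(U)\setminus X_\mathrm{pr}$ in the normal open set $p\inv(U)$. Being reductive, $G$ admits a closed embedding into some $\C^N$ (via a faithful representation together with a reciprocal-determinant coordinate). Each coordinate of $\gamma$ is a holomorphic function on the complement of a codimension-two analytic subset of a normal complex space, so extends uniquely to a holomorphic function on $p\inv(U)$ by the Riemann extension theorem for normal spaces. The resulting holomorphic map $\tilde\gamma:p\inv(U)\to\C^N$ takes values in $G$ because $G$ is closed in $\C^N$ and $X_\mathrm{pr}$ is dense, and the identities $\psi(x)=\tilde\gamma(x)x$ and $\tilde\gamma(hx)=h\tilde\gamma(x)h\inv$ hold globally by continuity.

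For the reverse direction, given a conjugation-equivariant holomorphic $\gamma:p\inv(U)\to G$, set $\psi(x)=\gamma(x)x$; then $p\circ\psi=p$ and $\psi(hx)=\gamma(hx)\,hx=h\gamma(x)h\inv\cdot hx=h\psi(x)$. To see $\psi$ is a biholomorphism, apply conjugation equivariance with $h=\gamma(x)$ to get $\gamma(\gamma(x)x)=\gamma(x)$, and with $h=\gamma(y)\inv$ to get $\gamma(\gamma(y)\inv y)=\gamma(y)$; these identities show that $\phi(y):=\gamma(y)\inv y$ is a two-sided holomorphic inverse to $\psi$. The two constructions are mutually inverse by the uniqueness of $\gamma$ on the dense open set $X_\mathrm{pr}\cap p\inv(U)$. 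The only substantive step is the extension across the non-principal locus; this is exactly where the codimension-two part of genericity and the affine-algebraic nature of $G$ are used essentially.
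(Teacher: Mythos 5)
Your proof is correct and follows essentially the same route as the paper's: reduce to the principal stratum, where genericity makes $p$ a principal $G$-bundle and freeness yields a unique holomorphic conjugation-equivariant $\gamma$, then extend across the codimension-two complement using normality. You merely make explicit two points the paper treats as immediate — the Riemann extension into the affine group $G$ via a closed embedding in $\C^N$, and the verification that $y\mapsto\gamma(y)\inv y$ inverts $\psi$ — which is fine.
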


\begin{proof}
Clearly, every $G$-equivariant holomorphic map $\gamma:p^{-1}(U)\to G$ induces a $G$-biholomorphism of $p^{-1}(U)$ over $U$ by the formula $x\mapsto \gamma(x) x$.  We need to show that every $G$-biholomorphism is induced by a unique such map.

The assumption that the action is generic implies that holomorphic functions, and therefore also holomorphic maps into Stein spaces, extend uniquely from $X_\mathrm{pr}$ to $X$.  Hence we may assume that $Q_\mathrm{pr}=Q$, so $p:X\to Q$ is a principal $G$-bundle.

Now if $\psi:p^{-1}(U)\to p^{-1}(U)$ is a $G$-biholomorphism over $U$, then, since the action on each fibre is free and transitive, there is a unique map $\gamma:p^{-1}(U)\to G$ with $\psi(x)=\gamma(x)x$ for all $x\in p^{-1}(U)$, and, since the action is free,
\[ (\gamma(gx)g)x=\psi(gx)= g\psi(x)=g\gamma(x)x \]
implies that $\gamma$ is $G$-equivariant.  To show that $\gamma$ is holomorphic, we holomorphically and $G$-equivariantly trivialise $p$ over a small open subset $V$ of $U$, making $p^{-1}(V)$ isomorphic to $G\times V$ such that $G$ acts by left multiplication in the first component.  Then $\gamma(g,y)=(\mathrm{pr}_1\circ\psi(g,y))g^{-1}$, so $\gamma$ is holomorphic.
\end{proof}

We may now view the cocycle $(\psi_{ij})$ as consisting of $G$-equivariant holomorphic maps $p^{-1}(U_{ij})\to G$.  Ignoring the $G$-equivariance for the moment, the cocycle defines a holomorphic principal bundle $E$ over $X$.  The total space of $E$ is obtained from the disjoint union of $p^{-1}(U_i)\times G$, $i\in I$, by identifying $(x,g)\in p^{-1}(U_i)\times G$ with $(x,\psi_{ji}(x)g)\in p^{-1}(U_j)\times G$ when $x\in p^{-1}(U_{ij})$.  The action of $G$ on the fibres is by right multiplication.

We define a holomorphic $G$-action on $E$ by setting $h\cdot(x,g)=(hx, hg)$ for $(x,g)\in p^{-1}(U_i)\times G$ and $h\in G$.  The action is well defined since 
\[(hx,h\psi_{ji}(x)g) = (hx, \psi_{ji}(hx)hg)\]
is identified with $(hx,hg)$ by the $G$-equivariance of the cocycle.  The action commutes with the action of $G$ on the fibres (left and right multiplications commute), and the projection $E\to X$ is $G$-equivariant.  Thus, in the language of Lashof \cite{Lashof}, which we shall use below, $E$ is a holomorphic principal $G$-$G$-bundle over $X$, where the first $G$ acts on $E$ by a lift of its action on $X$, and the second $G$ acts on each fibre of $E$.  (In the language of \cite{Heinzner-Kutzschebauch}, $E$ is a holomorphic $G$-principal $G$-bundle, where the first $G$ acts on each fibre and the second by a lift of the action on the base.)

Recall that $X$ and $Y$ are $G$-biholomorphic over $Q$ if and only if the cocycle $(\psi_{ij})$ splits (as a cocycle of $G$-equivariant holomorphic maps to $G$).  Equivalently, $E$ is trivial as a holomorphic principal $G$-$G$-bundle, meaning that it is isomorphic as a holomorphic $G$-$G$-bundle to the trivial principal $G$-$G$-bundle $X\times G$ with the action $(h,h')\cdot(x,g)=(h x, h g h')$.  By the equivariant Oka-Grauert principle of Heinzner and Kutzschebauch \cite[p.~341]{Heinzner-Kutzschebauch}, this holds if (and of course only if) there is a topological $K$-equivariant isomorphism from $E$ to the trivial principal $G$-$G$-bundle, that is, if $E$ is trivial as a topological principal $K$-$G$-bundle.  Here, $K$ denotes a maximal compact subgroup of $G$.  (In our situation, the complexification $X^\C$ discussed in \cite{Heinzner-Kutzschebauch} is $X$ itself.)  

Equivalently, the cocycle $(\psi_{ij})$ splits as a cocycle of $K$-equivariant continuous maps to $G$.  In other words, there is a $K$-equivariant homeomorphism $\sigma:X\to Y$ of a certain form.  Namely, on $p^{-1}(U_i)$, we have $\sigma(x)=\phi_i(\gamma_i(x)x)$, where $\gamma_i:p^{-1}(U_i)\to G$ is continuous and $K$-equivariant.

In particular, we have proved the following result.

\begin{theorem}  \label{t:topological-obstruction}
Let $G$ be a reductive complex Lie group acting holomorphically and generically on normal Stein spaces $X$ and $Y$, which are locally $G$-biholomorphic over a common quotient.  The obstruction to $X$ and $Y$ being $G$-biholomorphic is topological.  Namely, there is a bundle naturally arising from the given data whose topological triviality is equivalent to $X$ and $Y$ being $G$-biholomorphic.
\end{theorem}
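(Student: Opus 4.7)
The plan is to bundle all of the cocycle data into a single holomorphic principal bundle $E \to X$ carrying two commuting $G$-actions, and then invoke the equivariant Oka principle of Heinzner and Kutzschebauch. In fact, the argument is essentially laid out in the paragraphs immediately preceding the statement, and the theorem is a summary of that discussion; my proposal is simply to organise those steps into a clean proof.

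First, using the genericity assumption, I would apply Lemma \ref{l:g-biholomorphisms} to identify each $\psi_{ij}$ with a $G$-equivariant holomorphic map $p^{-1}(U_{ij})\to G$, where $G$ acts on itself by conjugation. Gluing the trivial pieces $p^{-1}(U_i)\times G$ along this cocycle (with $G$ acting on the right in the fibre factor) yields a holomorphic principal $G$-bundle $E\to X$. The crucial observation is that the conjugation-equivariance of the $\psi_{ij}$ is exactly what is needed for the naive formula $h\cdot(x,g)=(hx,hg)$ to descend to a well-defined holomorphic left $G$-action on $E$; this new action commutes with the fibre action and covers the given action on $X$, so $E$ is promoted to a holomorphic principal $G$-$G$-bundle over $X$ in the sense of Lashof (equivalently, of Heinzner-Kutzschebauch).

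Second, I would match up the two splitting problems. The cocycle $(\psi_{ij})$ splits in the sheaf $\A$ if and only if $E$ is isomorphic as a holomorphic principal $G$-$G$-bundle to the trivial model $X\times G$ with action $(h,h')\cdot(x,g)=(hx,hgh')$. Since splitting of $(\psi_{ij})$ is in turn the same as the existence of a global $G$-biholomorphism $X\to Y$ over $Q$, the problem of $G$-biholomorphism has been recast as a triviality problem for $E$ in the equivariant holomorphic category.

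Finally, I would apply the Heinzner-Kutzschebauch equivariant Oka-Grauert theorem: over a Stein $G$-space, a holomorphic principal $G$-$G$-bundle is holomorphically trivial if and only if it is $K$-equivariantly topologically trivial, where $K\subset G$ is a maximal compact subgroup. Since $X$ coincides with its own complexification in the sense of \cite{Heinzner-Kutzschebauch}, the hypotheses apply directly and yield the conclusion: the obstruction to $X$ and $Y$ being $G$-biholomorphic is exactly the topological triviality of $E$ as a $K$-$G$-bundle, which is the bundle promised in the statement. The only non-formal step — and the main point to verify carefully — is that $E$ with its two commuting actions genuinely fits the framework of \cite{Heinzner-Kutzschebauch}; everything else is bookkeeping.
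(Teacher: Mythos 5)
Your proposal is correct and follows essentially the same route as the paper: the paper's proof is precisely the discussion preceding the theorem, which uses Lemma \ref{l:g-biholomorphisms} to turn the cocycle into conjugation-equivariant maps to $G$, assembles the holomorphic principal $G$-$G$-bundle $E$ over $X$, and invokes the Heinzner--Kutzschebauch equivariant Oka principle to equate holomorphic triviality with topological $K$-$G$-triviality. No gaps; your organisation of the steps matches the paper's argument.
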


In the following sections, we will provide sufficient conditions for the obstruction to vanish, starting with the case when $G$ is abelian.

To close this section, we remark that to conclude that $E$ is trivial as a holomorphic principal $G$-$G$-bundle over $X$, it suffices to know that $E$ is trivial as a topological principal $K$-$G$-bundle over a Kempf-Ness set $R$ in $X$ \cite[p.~341]{Heinzner-Kutzschebauch}.  In other words, it suffices to split the cocycle $(\psi_{ij})$ on $R$.

We also remark, in the context of $G$ versus $K$, that if $X=\C^n$ or, more generally, $X$ has no nonconstant plurisubharmonic functions that are bounded above, then a holomorphic action of $G$ on $X$ is the same thing as an action (continuous or, equivalently, real analytic) of $K$ on $X$ by biholomorphisms \cite{Kutzschebauch}.  

%%%%%%%%%%

\section{An example}  \label{s:example}

\noindent
Kraft and Schwarz have shown that if $G$ is a reductive complex Lie group and $X$ is an affine $G$-variety such the categorical quotient map $X\to X\sl G$ is flat (this is a stringent assumption), then the functor associating to a morphism $Z\to X\sl G$ the group of $G$-automorphisms of the pullback $Z\times_{X\sl G} X$ over $Z$ is represented by an affine group scheme over the quotient \cite[Prop.~III.2.2]{Kraft-Schwarz}.  The following example shows that this may fail when the quotient map is not flat, even for a low-dimensional representation of $\mathrm{SL}_2(\C)$.

\begin{example}  \label{e:canonical-bad-example}
We let $G=\mathrm{SL}_2(\C)$ and consider the $G$-module $V=\C^2\oplus\C^2\oplus\C^2\cong\C^6$ with the action $g\cdot(v_1,v_2,v_3)=(gv_1,gv_2,gv_3)$.  This is a well-studied action: see e.g.\ \cite{Weyl}, \cite{DeConcini-Procesi}, and \cite[Sec.~I.4]{Kraft1984}.  

The categorical quotient map is $\pi:V\to Q=\C^3$, $(v_1,v_2,v_3)\mapsto (f_3,f_2,f_1)$, where $f_1=\det [v_2\ v_3]$, $f_2=\det[v_1\ v_3]$, $f_3=\det[v_1\ v_2]$ \cite[Thm.~6.6]{DeConcini-Procesi}.  If $\pi(v_1,v_2,v_3)\neq 0$, that is, the three vectors span $\C^2$, then the $\pi$-fibre through $(v_1,v_2,v_3)$ is simply the orbit through $(v_1,v_2,v_3)$, and is isomorphic to $G$ with trivial stabiliser.  In particular, the action is generic with principal stratum $Q^*=Q\setminus\{0\}$.  The null cone $N=\pi^{-1}(0,0,0)$, which consists of triples of vectors $(v_1,v_2,v_3)$ that span a line or are all zero, is a vector bundle of rank $2$ over $\mathbb P^2$ with the zero section blown down to a point \cite[p.~28]{Kraft1984}.  The point corresponds to the triple $(0,0,0)$ and is the unique closed orbit in $N$.  The non-closed orbits in $N$ are the fibres of the vector bundle with zero removed. 

It is clear that the group of $G$-automorphisms of each principal fibre is $G$ itself.  In fact, over the principal stratum $Q^*$, since $G$ acts freely, $\pi$ is a principal $G$-bundle (\cite[Cor.~5]{Luna}, \cite[Cor.~5.5]{Snow}).  Thus, over $Q^*$, the sheaf $\A$ of $G$-biholomorphisms of $V$ over $Q$ is the sheaf of sections of a holomorphic principal $G$-bundle.

There is another useful action on $V$.  Viewing $V$ as $\C^2\otimes\C^3$, we see that there is an action of $H=\mathrm{GL}_3(\C)$ on $V$, commuting with the $G$-action.  Since the actions commute, the $H$-action descends to $Q$, and $\pi:V\to Q$ is $H$-equivariant.  Also, $H$ acts by conjugation on the $G$-automorphisms of $V$ over $Q$.  Clearly, $H$ preserves the null cone $N$, so each element of $H$ induces a $G$-automorphism of $N$.  

It is easily seen that $H$ injects into $\Aut^G(N)$.  In fact, $\Aut^G(N)=H$ (this is not required for our arguments below).  Namely, the occurrences of the representation $\C^2$ in $\O_\mathrm{alg}(N)$ are spanned by the three copies of $\C^2$ in degree 1, and these copies of $\C^2$ generate $\O_\mathrm{alg}(N)$.  Thus an element of $\Aut^G(N)$ permutes the copies of $\C^2$ linearly and corresponds to an element of $H$.

We will show that the sheaf $\A$ is not representable, in the sense that there is no group object $\alpha:A\to Q$ in the category of complex spaces over $Q$ representing the functor that takes a holomorphic map $f:Y\to Q$ of reduced complex spaces to the group of $G$-automorphisms of $Y\times_Q V$ over $Y$, meaning that there is a natural group isomorphism between the group $\Aut_Y^G(Y\times_Q V)$ and the group $\Hom(f,\alpha)$ of all holomorphic maps $g:Y\to A$ with $\alpha\circ g=f$, which is naturally identified with the group of holomorphic sections of $Y\times_Q A$ over $Y$.  Informally speaking, $\A$ has a bad singularity over $0$.  Since $Y$ is reduced, $\Hom(f,\alpha)$ is naturally identified with $\Hom(f,\tilde\alpha)$, where $\tilde\alpha:\tilde A\to Q$ is the reduction of $\alpha$.  Thus we may assume that $A$ is reduced.

Suppose such a representing $\alpha$ exists.  We will derive a contradiction.  First, letting $Y$ run through the points of $Q$, we see that the fibres of $\alpha$ over $Q^*$ are $G$.  The fibre over $0$ is $\Aut^G(N)\supset H$.  Over $Q^*$, $A$ is connected (since $G$ is connected) and 6-dimensional, so $\alpha^{-1}(Q^*)$ lies in a 6-dimensional irreducible component $C$ of $A$.  Clearly, $H\cap C\neq C$, so $H\cap C$ is a closed subgroup of $H$ of dimension at most 5.

We want to know that $H\cap C$ is a \textit{normal} subgroup of $H$, because this narrows it down drastically.  As a consequence of the Lie algebra $\mathfrak{sl}_3$ being simple, a normal subgroup of $H$ either consists of scalar matrices or contains $\mathrm{SL}_3(\C)$.  If $H\cap C$ is normal, since it is at most 5-dimensional, it must consist of scalar matrices, which is easily contradicted.  Indeed, take $f$ in the universal property to be the inclusion of the line $Y=\{(t,0,0):t\in\C\}$ into $Q$.  For $t\neq 0$,
\[\pi^{-1}(t,0,0)=\{(v_1, v_2, 0)\in V : \det[v_1\ v_2]=t\} \cong G, \]
and of course $\pi^{-1}(0,0,0)=N$.  These are the fibres of $Y\times_Q V\to Y$.  There is a $G$-automorphism of $Y\times_Q V$ over $Y$ given by $(v_1,v_2,v_3)\mapsto (2v_1, \tfrac 1 2 v_2, v_3)$.  Its restriction to $N$ is given by a non-scalar element of $H$.  (Alternatively, if we know that $\Aut^G(N)=H$, we can simply observe that the fibre dimension of $\alpha\vert C$ cannot drop from 3 down to 0 or 1 over $0\in Q$.)

To see that $H\cap C$ is normal, we verify that the action of $H$ by conjugation is built into $A$ via its universal property.  Let $h\in H$ and take $f$ in the universal property above to be $A \xrightarrow{\alpha} Q \xrightarrow{h} Q$.  Then we have a natural bijection between $\Hom(f,\alpha)$, which is the set of liftings of $h$ to $A$, and $\Aut_A^G(A\times_Q V)$. The desired action of $h$ on $A$ is the lifting corresponding to the $G$-automorphism 
\[ (\phi,v)\mapsto(\phi, (h\phi h^{-1})(v)) \]
of $A\times_Q V$ over $A$.
\end{example}

%%%%%%%%%%

\section{Abelian reductive groups}  \label{s:torus}

\noindent
We use the notation established at the beginning of Section \ref{s:generic}.  In this section, we take $G$ to be abelian.  Then the conjugation action of $G$ on itself is trivial, so a $G$-equivariant map $p^{-1}(U)\to G$, where $U\subset Q$ is open, is simply a $G$-invariant map.  Thus, by the universal property of the categorical quotient, $\A$ may be identified with the sheaf of holomorphic maps from open subsets of $Q$ into $G$.

If $G$ is a torus $(\C^*)^k$, $k\geq 1$, then
\[ H^1(Q,\A)\cong H^1(Q,\O^*)^k\cong H^2(Q,\Z)^k \]
since $Q$ is Stein.  The second isomorphism is Oka's original Oka principle!  The following proposition shows that $H^2(Q,\Z)$ vanishes if $X$ is smooth and $\Z$-acyclic.

\begin{proposition}  \label{p:acyclic}
Let $G$ be a reductive complex Lie group acting holomorphically on a $\mathbb Z$-acyclic Stein manifold
 $X$.  Then $X\sl G$ is $\mathbb Z$-acyclic.
\end{proposition}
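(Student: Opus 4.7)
The plan is to apply the Kempf-Ness theorem for Stein $G$-spaces, as developed by Heinzner and collaborators. Let $K$ be a maximal compact subgroup of $G$, and fix a $K$-invariant strictly plurisubharmonic exhaustion function on $X$; let $R\subset X$ be the associated Kempf-Ness set (the zero locus of the moment map, equivalently the set of minimal vectors). Two crucial properties hold: (i) $R\hookrightarrow X$ is a $K$-equivariant strong deformation retract, and (ii) the composition $R\hookrightarrow X\to Q$ descends to a homeomorphism $R/K\xrightarrow{\sim}Q$. Property (i) implies that $R$ inherits $\Z$-acyclicity from $X$ and, upon passing to $K$-quotients, yields a strong deformation retract $X/K\to R/K$; combining with (ii) gives a homotopy equivalence $Q\simeq X/K$.

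The problem thus reduces to the purely topological assertion that the orbit space $M/K$ of a compact Lie group $K$ acting smoothly on a $\Z$-acyclic smooth manifold $M$ is $\Z$-acyclic. I would attack this via the Borel construction $M_K=M\times_K EK$. Since $M$ is $\Z$-acyclic, the Serre spectral sequence of the fibration $M\hookrightarrow M_K\to BK$ collapses at $E_2$, giving $H^*(M_K;\Z)\cong H^*(BK;\Z)$. The projection $\pi\colon M_K\to M/K$ has fibre $BK_x$ over $[x]\in M/K$; to extract $H^*(M/K;\Z)$ from $H^*(M_K;\Z)$, I would stratify $M/K$ by orbit type (via the smooth slice theorem for compact group actions, producing a Luna-type stratification) and run a Leray/Mayer-Vietoris spectral sequence compatible with the stratification, comparing the two fibrations of $M_K$ stratum by stratum.

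The main obstacle is precisely this final extraction: over the principal stratum (trivial stabiliser) $\pi$ is a weak equivalence, but positive-dimensional stabilisers on lower strata make $\pi$ fail to be a homotopy equivalence, and one must carefully account for the contributions of the classifying spaces $BK_x$. A possibly cleaner alternative is a structural induction on $\dim G$: first quotient by the connected centre (a torus), handling the torus case by a Gysin-type argument on $S^1$-factors in the spirit of the computation $H^1(Q,\A)\cong H^2(Q,\Z)^k$ recorded just above the proposition, and then reduce the residual semisimple quotient via Luna slices to cases of strictly smaller group dimension, bottoming out at the trivial group. Either route is expected to yield $H^i(Q;\Z)=0$ for every $i\geq 1$, completing the proof.
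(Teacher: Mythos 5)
Your first step is exactly the paper's: the Kempf--Ness set $R$ associated to a $K$-invariant strictly plurisubharmonic exhaustion is a $K$-equivariant strong deformation retract of $X$, and $X\sl G$ is homeomorphic to $R/K$, so the problem reduces to showing that $X/K$ is $\Z$-acyclic. The gap is in everything after that. The assertion that the orbit space of a compact Lie group acting on a $\Z$-acyclic space is $\Z$-acyclic is the Conner conjecture, and it is a genuinely hard theorem: Conner settled the case of tori and their finite extensions (essentially by the Gysin-type argument you sketch as your ``cleaner alternative''), but the general case resisted exactly the Borel-construction-plus-stratification analysis you outline, because of the contributions of the classifying spaces $BK_x$ of positive-dimensional stabilisers on the lower strata --- the very obstacle you name and do not overcome. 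Your proposed induction on $\dim G$ does not close the gap either: after quotienting by the connected centre the intermediate orbit space is no longer a manifold, so the slice-theoretic induction cannot be iterated, and the semisimple case is precisely the hard one. ``Either route is expected to yield'' the vanishing is not a proof.

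The paper does not attempt to prove this topological statement; it invokes Oliver's theorem (R.~Oliver, \emph{A proof of the Conner conjecture}, Ann.\ of Math.\ (2) \textbf{103} (1976) 637--644), whose hypotheses are that the space be paracompact, of finite cohomological dimension, and with only finitely many orbit types. The last hypothesis is not automatic, and the paper verifies it by a theorem of Mann: a compact Lie group acting on an orientable cohomology manifold over $\Z$ with finitely generated integral cohomology has only finitely many orbit types. So your write-up is missing two concrete ingredients: a citation of (or a complete proof of) Oliver's theorem in place of the spectral-sequence sketch, and the verification of the finitely-many-orbit-types hypothesis via Mann's theorem.
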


\begin{proof}
First, $X$ has a real-analytic $K$-invariant strictly plurisubharmonic exhaustion, and the corresponding Kempf-Ness set $R$ is a $K$-equivariant strong deformation retract of $X$ \cite[p.~23]{Heinzner-Huckleberry}.  Hence, $R/K$ is a strong deformation retract of $X/K$.  Also, $X\sl K=X\sl G$ is homeomorphic to $R/K$  \cite[p.~22]{Heinzner-Huckleberry}.  Therefore it suffices to show that the orbit space $X/K$ is $\mathbb Z$-acyclic.

By a theorem of Oliver \cite{Oliver}, to conclude that $X/K$ is $\mathbb Z$-acyclic, we need to know that $X$ is paracompact of finite cohomological dimension and with finitely many $K$-orbit types (finitely many conjugacy classes of stabilisers).  The first two conditions are evident.

To verify the third condition, we use a theorem of Mann \cite{Mann}, which states that a compact Lie group acting on an orientable cohomology manifold over $\mathbb Z$ with finitely generated integral cohomology has only finitely many orbit types.  We conclude that the action of $K$ on $X$ has only finitely many orbit types.
\end{proof}

The following theorem is now immediate for tori, and with a little more work we can prove it for abelian groups in general.

\begin{theorem}  \label{t:torus}
Let $X$ and $Y$ be Stein manifolds on which a reductive complex Lie group $G$ acts holomorphically and generically, such that $X$ and $Y$ are locally $G$-biholomorphic over a common quotient.  If $G$ is abelian and $X$ is $\mathbb Z$-acyclic, then $X$ and $Y$ are $G$-biholomorphic.
\end{theorem}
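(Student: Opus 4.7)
By Theorem \ref{t:topological-obstruction} and the preceding discussion, $X$ and $Y$ are $G$-biholomorphic over $Q$ if and only if the cocycle class in $H^1(Q,\A)$ vanishes. Since $G$ is abelian, the conjugation action of $G$ on itself is trivial, so by Lemma \ref{l:g-biholomorphisms} the sheaf $\A$ is identified with the sheaf of germs of holomorphic maps $Q\to G$. The plan is to decompose $G$ so as to reduce this $H^1$ to the torus case already handled at the start of the section, plus a term with constant finite coefficients.

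An abelian reductive complex Lie group $G$ splits holomorphically as $G\cong T\times F$, where $T=G^0\cong(\C^*)^k$ is the identity component and $F\cong G/G^0$ is a finite abelian group. This is a standard fact, provable either by realising $G$ as the complexification of its maximal compact subgroup, which has the form $(S^1)^k\times F$, or by observing that $\operatorname{Ext}^1(F,\C^*)=0$, so the extension $1\to T\to G\to F\to 1$ admits an abstract group-theoretic splitting, which is automatically a morphism of complex Lie groups because $F$ is discrete.

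The decomposition $G\cong T\times F$ induces a decomposition of sheaves of abelian groups $\A\cong\A_T\times\underline F$ (holomorphic maps into the discrete space $F$ are locally constant), and hence $H^1(Q,\A)\cong H^1(Q,\A_T)\oplus H^1(Q,\underline F)$. The first summand is $H^1(Q,\O^*)^k\cong H^2(Q,\Z)^k$ by Oka's original Oka principle, exactly as in the torus discussion preceding the theorem; by Proposition \ref{p:acyclic}, $Q$ is $\Z$-acyclic, so this group vanishes. The second summand is the singular cohomology group $H^1(Q;F)$, which by the universal coefficient theorem equals $\Hom(H_1(Q;\Z),F)\oplus\operatorname{Ext}(H_0(Q;\Z),F)$; both summands vanish because $Q$ is connected and $\Z$-acyclic.

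The only step beyond the torus case is the splitting $G\cong T\times F$; once this is in hand, the argument reduces to the torus computation already recorded, together with the universal-coefficient vanishing for the finite component group. Thus the \textquotedblleft little more work\textquotedblright\ promised by the authors is essentially just this decomposition, and that is where I expect the only mild subtlety to lie.
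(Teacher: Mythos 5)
Your proposal is correct and follows essentially the same route as the paper: reduce $H^1(Q,\A)$ to the torus part $H^1(Q,\O^*)^k\cong H^2(Q,\Z)^k$ plus the finite part $H^1(Q,F)$, and kill both using Proposition \ref{p:acyclic}, Oka's principle, and universal coefficients. The only cosmetic difference is that you split the extension $1\to T\to G\to F\to 1$ (valid, since $\C^*$ is divisible, so the splitting exists and is automatically holomorphic) to obtain a direct sum decomposition of $\A$, whereas the paper keeps the extension unsplit and uses the long exact cohomology sequence of the induced short exact sequence of sheaves; the ingredients and the conclusion are identical.
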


\begin{proof}
Since $G$ is abelian, it is an extension of a torus $T=(\C^*)^k$ by a finite abelian group $F$.  The short exact sequence $0\to T\to G\to F\to 0$ induces a short exact sequence
\[ 0 \to \O(\cdot,T) \to \O(\cdot,G)\to \O(\cdot,F)\to 0 \]
of sheaves of abelian groups on $Q=X\sl G$.  Clearly, $\O(\cdot, T)=(\O^*)^k$, $\O(\cdot, G)=\A$, and $\O(\cdot,F)$ is simply the sheaf of locally constant functions with values in $F$.  Consider the long exact sequence
\[ \cdots \to H^1(Q, \O^*)^k \to H^1(Q, \A) \to H^1(Q, F) \to \cdots.  \]
By Proposition \ref{p:acyclic}, $H^1(Q,\O^*)\cong H^2(Q,\Z)=0$.  By Proposition \ref{p:acyclic} and universal coefficients, $H^1(Q,F)=0$ (for this we require both $H^1(Q,\Z)$ and $H^2(Q,\Z)$ to vanish).  Hence $H^1(Q,\A)=0$, so the obstruction to $X$ and $Y$ being $G$-biholomorphic over $Q$ vanishes.
\end{proof}

Although we do not need it, we remark that arguments of Kraft, Petrie, and Randall \cite{Kraft-Petrie-Randall} in the algebraic case carry over to the analytic case and, combined with Proposition \ref{p:acyclic}, show that if $G$ is a reductive complex Lie group acting holomorphically on a contractible Stein manifold $X$, then $X\sl G$ is contractible.  If there is a connected $G$-orbit, for instance if $G$ is connected or has a fixed point in $X$, then the result follows easily from Proposition \ref{p:acyclic} and \cite[Cor.~II.6.3]{Bredon}.

A deep theorem of Hochster and Roberts \cite{Hochster-Roberts} states that the categorical quotient of a smooth affine variety by the action of a reductive group is Cohen-Macaulay.  Slice theory allows us to easily extend the theorem to the holomorphic setting.  We note that Cohen-Macaulay in the algebraic sense is equivalent to Cohen-Macaulay in the holomorphic sense.  Namely, by GAGA \cite[Prop.~3]{Serre}, each stalk of the algebraic structure sheaf has the same completion as the corresponding stalk of the holomorphic structure sheaf, so one is Cohen-Macaulay if and only if the other one is \cite[Prop.~18.8]{Eisenbud}.

\begin{proposition}  \label{p:cohen-macaulay}
Let a reductive complex Lie group $G$ act holomorphically on a Stein manifold $X$.  The categorical quotient $X\sl G$ is Cohen-Macaulay.
\end{proposition}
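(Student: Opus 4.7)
The plan is to argue locally on $Q = X\sl G$ using the slice theorem to reduce to an affine model, where the algebraic Hochster--Roberts theorem applies, and then transfer back to the holomorphic setting via the GAGA remark made just before the statement. Since Cohen--Macaulayness is a stalkwise property, it suffices to fix an arbitrary point $q \in Q$ and show that the local ring $\O_{Q,q}$ is Cohen--Macaulay.

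Let $x$ be a point in the unique closed $G$-orbit over $q$, and set $H = G_x$, which is reductive. Apply the slice theorem as recalled in Section~\ref{s:generic}: there is a locally closed $H$-saturated Stein subvariety $S$ through $x$ such that $G \times^H S \to X$, $[g,s] \mapsto gs$, is a $G$-biholomorphism onto a $G$-saturated neighbourhood of $x$. Moreover, since $X$ is smooth, we may take $S$ to be $H$-biholomorphic to an $H$-saturated open neighbourhood of $0$ in the slice representation $W = T_x X / T_x(Gx)$. Taking categorical quotients, the germ of $Q$ at $q = p(x)$ is identified with the germ of $S \sl H$ at the image of $x$, which in turn coincides with the germ of $W \sl H$ at $0$.

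Now $W$ is a smooth affine variety (a single $H$-module), and $H$ is a reductive complex Lie group acting algebraically on it. By the Hochster--Roberts theorem, the algebraic categorical quotient of $W$ by $H$ is Cohen--Macaulay in the algebraic sense. Using the GAGA argument noted in the paragraph preceding the proposition (each stalk of the algebraic structure sheaf has the same completion as the corresponding holomorphic stalk, and Cohen--Macaulayness passes to and from the completion by \cite[Prop.~18.8]{Eisenbud}), we conclude that $W \sl H$ is Cohen--Macaulay as a complex space at $0$. Therefore $\O_{Q,q}$ is Cohen--Macaulay. Since $q$ was arbitrary, $Q = X\sl G$ is Cohen--Macaulay.

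There is no real obstacle here; the whole argument is a bookkeeping reduction. The only point that needs a moment's care is verifying that the slice reduction actually identifies the germ of $Q$ at $q$ with the germ of the affine quotient $W \sl H$ at the origin, so that the algebraic-to-analytic transfer is applied to the correct local model. Once this identification is in place, Hochster--Roberts plus GAGA delivers the conclusion immediately.
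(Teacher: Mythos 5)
Your argument is correct and follows essentially the same route as the paper's proof: localise via the slice theorem to identify the germ of $Q$ at $q$ with the germ of $W\sl H$ at the origin, apply Hochster--Roberts to the linear algebraic $H$-action on $W$, and transfer to the analytic category by the GAGA remark. The paper's version is just a more condensed statement of the same reduction.
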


\begin{proof}
Let $q$ be a point in $X\sl G$, and let $x$ be a point in the closed orbit over $q$ with stabiliser $H$. By the slice theorem, a neighbourhood of $q$ in $X\sl G$ is biholomorphic to an open subset of the quotient of $W$ by the reductive group $H$, where $W$ is the slice representation of $H$.  The action of $H$ on $W$ is linear algebraic.  By \cite{Hochster-Roberts}, $W\sl H$ is Cohen-Macaulay, so the stalk $\O_{X\sl G,q}$ is Cohen-Macaulay.  Thus $X\sl G$ is Cohen-Macaulay
\end{proof}

\begin{theorem}  \label{t:codim-3}
Let $X$ and $Y$ be Stein manifolds on which a reductive complex Lie group $G$ acts holomorphically and generically, such that $X$ and $Y$ are locally $G$-biholomorphic over a common quotient $Q$.  Assume moreover that $G$ is abelian and that the complement of the principal stratum $Q_\mathrm{pr}$ has codimension at least 3 in $Q$.  If there is an equivariant homeomorphism $X_\mathrm{pr}\to Y_\mathrm{pr}$ over $Q_\mathrm{pr}$, then $X$ and $Y$ are $G$-biholomorphic over $Q$.
\end{theorem}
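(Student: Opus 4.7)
The plan is to show the obstruction class $\xi\in H^1(Q,\A)$ vanishes by identifying it, through two natural maps, with its image in a continuous cohomology group on $Q_\mathrm{pr}$ where it is manifestly trivial. Because $G$ is abelian, Lemma~\ref{l:g-biholomorphisms} identifies $\A$ with the sheaf $\O(\cdot,G)$ of holomorphic $G$-valued functions on $Q$, and the same argument in the continuous category identifies the sheaf of continuous $G$-equivariant self-maps of $X$ over $Q$ with the sheaf $\mathscr C(\cdot,G)$ of continuous $G$-valued functions. Precomposing the given equivariant homeomorphism locally with the $\phi_i$ produces continuous $\gamma_i\colon U_i\cap Q_\mathrm{pr}\to G$ which split the cocycle $(\psi_{ij})$ over $Q_\mathrm{pr}$, so $\xi$ maps to zero in $H^1(Q_\mathrm{pr},\mathscr C(\cdot,G))$.

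\textbf{Two cohomological comparisons.} Write $G$ as an extension $1\to T\to G\to F\to 1$ with $T=(\C^*)^k$ and $F$ finite abelian. The holomorphic and continuous exponential sheaf sequences, combined with Cartan's Theorem~B for the Stein space $Q$ and with the fineness of $\mathscr C$, give, for each $i\geq 1$,
\[H^i(Q,\O(\cdot,T))\cong H^{i+1}(Q,\Z)^k\cong H^i(Q,\mathscr C(\cdot,T)),\]
while the $F$-coefficient groups coincide in the two categories. Applying the five lemma to the long exact sequences of sheaf cohomology induced on $Q$ by $1\to T\to G\to F\to 1$ yields a bijection $H^1(Q,\A)\xrightarrow{\sim}H^1(Q,\mathscr C(\cdot,G))$. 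For the second comparison, I would show that the restriction $H^1(Q,\mathscr C(\cdot,G))\to H^1(Q_\mathrm{pr},\mathscr C(\cdot,G))$ is injective; by the same decomposition this reduces to the assertions that $H^i(Q,\Z)\to H^i(Q_\mathrm{pr},\Z)$ is bijective for $i\leq 2$ and injective for $i=3$, and analogously with $F$ coefficients. All of these follow from the vanishing $H^i(Q,Q_\mathrm{pr};\Z)=0$ for $i\leq 3$.

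\textbf{Main obstacle.} The hard step is this last topological vanishing, and it is here that both the Cohen-Macaulay hypothesis and the codimension assumption enter crucially. If $Q$ were smooth, $Q\setminus Q_\mathrm{pr}$ would have real codimension at least $6$ and the vanishing would be a standard consequence of Alexander--Lefschetz duality. In general $Q$ is singular, and the cohomological codimension of a closed analytic subset need not be controlled by its complex codimension alone. At this point I would invoke Proposition~\ref{p:cohen-macaulay}, asserting that $Q$ is Cohen-Macaulay, and combine it with the depth and local cohomology results of Banica and Stanasila \cite{Banica-Stanasila} for Cohen-Macaulay complex spaces to deduce that $H^i(Q,Q_\mathrm{pr};\Z)=0$ in the required range (in fact for $i\leq 5$). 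Putting the three steps together, $\xi$ vanishes in $H^1(Q,\A)$, the cocycle $(\psi_{ij})$ splits holomorphically, and $X$ and $Y$ are $G$-biholomorphic over $Q$.
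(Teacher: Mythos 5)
Your overall strategy (compare holomorphic with continuous cohomology via the extension $1\to T\to G\to F\to 1$, use the hypothesis to kill the continuous class over $Q_\mathrm{pr}$) is the right one, and your first comparison $H^1(Q,\A)\cong H^1(Q,\mathscr C(\cdot,G))$ is fine. The gap is in the step you yourself flag as the main obstacle: the claimed vanishing $H^i(Q,Q_\mathrm{pr};\Z)=0$ for $i\leq 3$ does \emph{not} follow from $Q$ being Cohen--Macaulay with $\codim (Q\setminus Q_\mathrm{pr})\geq 3$. The Scheja--Trautmann/B\u anic\u a--St\u an\u a\c sil\u a results control restriction maps $H^i(Q,\mathscr F)\to H^i(Q_\mathrm{pr},\mathscr F)$ only for \emph{coherent} sheaves $\mathscr F$, in terms of the depth of $\mathscr F$ along the removed set; they say nothing about the constant sheaf $\Z$. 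In fact your claim is false in the exact setting of the theorem: let $G=\C^*$ act on $\C^4$ with weights $(1,1,-1,-1)$. This module is generic (two positive and two negative weights, any three coprime), $Q$ is the quadric cone $\{xy=zw\}\subset\C^4$, which is Cohen--Macaulay (a hypersurface), and $Q_\mathrm{pr}=Q\setminus\{0\}$ with the origin of codimension $3$. But $Q$ is contractible while $Q_\mathrm{pr}$ retracts onto the unit circle bundle of $\O(1,1)$ over $\mathbb P^1\times\mathbb P^1$, so $H^2(Q_\mathrm{pr},\Z)\cong\Z\neq 0=H^2(Q,\Z)$ and $H^3(Q,Q_\mathrm{pr};\Z)\neq 0$. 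Thus the restriction $H^2(Q,\Z)\to H^2(Q_\mathrm{pr},\Z)$ is not bijective, and your route to the injectivity of $H^1(Q,\mathscr C(\cdot,G))\to H^1(Q_\mathrm{pr},\mathscr C(\cdot,G))$ collapses.

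The paper avoids any statement about integral cohomology of $Q$ versus $Q_\mathrm{pr}$ by working entirely over $Q_\mathrm{pr}$: it compares the holomorphic and continuous exponential sequences \emph{on $Q_\mathrm{pr}$} and shows that $H^1(Q_\mathrm{pr},\O^*)\to H^1(Q_\mathrm{pr},\mathscr C^*)$ is injective because both inject compatibly into $H^2(Q_\mathrm{pr},\Z)$ once one knows $H^1(Q_\mathrm{pr},\O)=0$. That single coherent vanishing is exactly what Cohen--Macaulayness plus $\codim\geq 3$ buys via Scheja--Trautmann, since then $H^1(Q_\mathrm{pr},\O)\cong H^1(Q,\O)=0$ by Cartan's Theorem B. A four-lemma argument with the $T$--$G$--$F$ sequence on $Q_\mathrm{pr}$ then shows the obstruction in $H^1(Q_\mathrm{pr},\A)$ vanishes, and the resulting $G$-biholomorphism $X_\mathrm{pr}\to Y_\mathrm{pr}$ extends to $X\to Y$ because $X\setminus X_\mathrm{pr}$ has codimension at least $2$. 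If you reorganise your argument along these lines --- using the coherent extension theorem only for $\O$, and never for $\Z$ --- your proof goes through.
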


\begin{proof}
We use the same notation as in the proof of Theorem \ref{t:torus}.  Consider the commuting diagram
\[ \xymatrix{
H^0(Q_\mathrm{pr}, F) \ar[r] \ar@{=}[d] & H^1(Q_\mathrm{pr}, \O^*)^k \ar[r] \ar[d]^\alpha & H^1(Q_\mathrm{pr}, \A) \ar[r] \ar[d]^\beta & H^1(Q_\mathrm{pr}, F) \ar@{=}[d] \\
H^0(Q_\mathrm{pr}, F) \ar[r] & H^1(Q_\mathrm{pr}, \mathscr C^*)^k \ar[r] & H^1(Q_\mathrm{pr}, \mathscr C(\cdot, G)) \ar[r] & H^1(Q_\mathrm{pr}, F) } \]
with exact rows (here, $\mathscr C$ refers to continuous functions).  Let $\omega\in H^1(Q_\mathrm{pr}, \A)$ be the obstruction to $X_\mathrm{pr}$ and $Y_\mathrm{pr}$ being $G$-biholomorphic over $Q_\mathrm{pr}$.  By assumption, $\beta(\omega)=0$.  We will show that $\alpha$ is injective.  It follows that $\beta$ is injective, so $\omega=0$.  Since $X\setminus X_\mathrm{pr}$ has codimension at least 2 in $X$ by the genericity assumption, any $G$-biholomorphism $X_\mathrm{pr}\to Y_\mathrm{pr}$ over $Q_\mathrm{pr}$ extends to a $G$-biholomorphism $X\to Y$ over $Q$.

Now consider the commuting diagram
\[ \xymatrix{
H^1(Q_\mathrm{pr}, \Z) \ar[r] \ar@{=}[d] & H^1(Q_\mathrm{pr}, \O) \ar[r] \ar[d] & H^1(Q_\mathrm{pr}, \O^*) \ar[r] \ar[d] & H^2(Q_\mathrm{pr}, \Z) \ar@{=}[d] \\
H^1(Q_\mathrm{pr}, \Z) \ar[r] & H^1(Q_\mathrm{pr}, \mathscr C)=0 \ar[r] & H^1(Q_\mathrm{pr}, \mathscr C^*) \ar[r] & H^2(Q_\mathrm{pr}, \Z)  } \]
with exact rows.  By Proposition \ref{p:cohen-macaulay}, $Q$ is Cohen-Macaulay, and since $Q\setminus Q_\mathrm{pr}$ has codimension at least 3, the vanishing theorem for local cohomology of Scheja and Trautmann (\cite{Scheja}, \cite{Trautmann}, \cite[Thm.~1.14]{Siu-Trautmann}, \cite[Thm.~II.3.6]{Banica-Stanasila}) implies that $H^1(Q_\mathrm{pr},\O)\cong H^1(Q,\O)=0$.  Hence $\alpha$ is injective.
\end{proof}

%%%%%%%%%%

\section{General reductive groups}

\noindent
We continue to use the notation established at the beginning of Section \ref{s:generic}.  We refer to Lashof's foundational paper for the results we need about the classification of topological $K$-$G$-bundles over $X$ (see also \cite{tomDieck}).  Since $G$ is isomorphic to a closed subgroup of $\mathrm{GL}_n(\C)$ for some $n$, every $K$-$G$-bundle over $X$ is numerable \cite[Prop.~1.11 and Cor.~1.13]{Lashof}.  (Numerability is a technical condition whose definition we omit.)

We say that $X$ is $K$-contractible if the identity map of $X$ can be joined to a constant map by a continuous path of $K$-equivariant continuous maps $X\to X$.  The value of the constant map must then be a fixed point $x_0$ of the $K$-action.  A $K$-module is obviously $K$-contractible.

By \cite[Cor.~2.11]{Lashof}, if $X$ is $K$-contractible, then every numerable $K$-$G$-bundle over $X$ is isomorphic as a $K$-$G$-bundle to its own pullback by the map that takes all of $X$ to $x_0$.  Thus, by the discussion preceding Theorem \ref{t:topological-obstruction}, we have the following theorem and corollaries.

\begin{theorem}   \label{t:K-contractible}
Let $G$ be a reductive complex Lie group with maximal compact subgroup $K$.  Let $X$ and $Y$ be normal Stein spaces on which $G$ acts holomorphically and generically, such that $X$ and $Y$ are locally $G$-biholomorphic over a common quotient.  If $X$ is $K$-contractible, then $X$ and $Y$ are $G$-biholomorphic.
\end{theorem}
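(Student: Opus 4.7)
The plan is to carry out the strategy set up in Section \ref{s:generic} and complete it using a Lashof-type classification of equivariant bundles together with the equivariant Oka principle. From the cocycle $(\psi_{ij})$ of $G$-equivariant holomorphic maps $p^{-1}(U_{ij}) \to G$ (with $G$ acting on itself by conjugation) induced by the local biholomorphisms $\phi_i$, form the holomorphic principal $G$-$G$-bundle $E \to X$ as described before Theorem \ref{t:topological-obstruction}. There, it is shown that $X$ and $Y$ are $G$-biholomorphic over $Q$ if and only if $E$ is holomorphically trivial as a $G$-$G$-bundle, and by \cite{Heinzner-Kutzschebauch} this in turn is equivalent to $E$ being topologically trivial as a $K$-$G$-bundle. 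The task is therefore to establish topological triviality.

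To that end, I would invoke Lashof's \cite[Cor.~2.11]{Lashof}. Its hypotheses are met: since $G$ embeds as a closed subgroup of $\mathrm{GL}_n(\C)$, every $K$-$G$-bundle over $X$ is numerable, and by $K$-contractibility there is a $K$-equivariant homotopy from $\mathrm{id}_X$ to the constant map $c\colon X \to X$, $x \mapsto x_0$, with value a $K$-fixed point $x_0$. Lashof's corollary then yields a $K$-$G$-bundle isomorphism $E \cong c^* E$, which unpacks to $E \cong X \times E_{x_0}$ with $K$ acting diagonally (through its actions on $X$ and on $E_{x_0}$) and $G$ acting on the $E_{x_0}$ factor alone.

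The essential remaining step is to identify the fibre $E_{x_0}$ with the trivial $K$-$G$-bundle over a point, namely $G$ with $K$ acting by left multiplication through the inclusion $K \hookrightarrow G$ and $G$ acting on the right. Pick $i$ with $x_0 \in p^{-1}(U_i)$: the chart identifies $E_{x_0}$ with $\{x_0\} \times G \cong G$, and the $K$-action on this fibre reads $k \cdot (x_0, g) = (k x_0, k g) = (x_0, k g)$, which is exactly left multiplication via the inclusion. Hence $X \times E_{x_0}$ is the trivial $K$-$G$-bundle over $X$ in the Heinzner-Kutzschebauch sense, so $E$ is topologically trivial as a $K$-$G$-bundle. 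The equivariant Oka principle then upgrades this to holomorphic triviality of $E$ as a $G$-$G$-bundle, the cocycle $(\psi_{ij})$ splits, and the resulting global section furnishes the desired $G$-biholomorphism $X \to Y$. The one step demanding care is the final identification: one must track precisely what \emph{trivial} $K$-$G$-bundle means in the sense of \cite{Heinzner-Kutzschebauch} and verify that the fibre calculation at $x_0$ really produces the bundle associated to the inclusion $K \hookrightarrow G$ rather than one coming from some other homomorphism $K \to G$.
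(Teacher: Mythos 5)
Your proposal is correct and follows essentially the same route as the paper: both form the principal $G$-$G$-bundle $E$ from the cocycle, apply Lashof's \cite[Cor.~2.11]{Lashof} via numerability and $K$-contractibility to trivialise $E$ topologically as a $K$-$G$-bundle, and conclude with the equivariant Oka principle of Heinzner and Kutzschebauch. The only difference is that you spell out the identification of the fibre $E_{x_0}$ over the $K$-fixed point with the trivial $K$-$G$-bundle over a point, a step the paper leaves implicit; your computation of it is correct.
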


\begin{corollary}  \label{c:first-corollary}
Let $X$ be a Stein manifold  on which $G$ acts holomorphically and let $V$ be a $G$-module, such that $X$ and $V$ are locally $G$-biholomorphic over a common quotient.  If $X$ and $V$ are generic, then $X$ and $V$ are $G$-biholomorphic.
\end{corollary}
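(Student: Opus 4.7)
The plan is to deduce this corollary directly from Theorem \ref{t:K-contractible} by exploiting the symmetry of its hypotheses in the two spaces involved and observing that $V$, being a $G$-module, is $K$-contractible in the most elementary way possible.

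First I would verify that $V$ is $K$-contractible. The scalar multiplication map $H\colon V\times[0,1]\to V$, $(v,t)\mapsto tv$, is continuous, joins the identity at $t=1$ to the constant map $v\mapsto 0$ at $t=0$, and each slice $h_t(v)=tv$ is $K$-equivariant because the action of $K\subset G$ on the vector space $V$ is linear. The origin $0\in V$ is a fixed point of the full $G$-action, hence of $K$, consistent with the remark in the text that the value of the constant map is necessarily a fixed point.

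Next I would apply Theorem \ref{t:K-contractible} with the roles exchanged: take the theorem's ``$X$'' to be the $G$-module $V$ and its ``$Y$'' to be the Stein manifold $X$ of the corollary. All hypotheses of the theorem are symmetric in the two spaces: ``locally $G$-biholomorphic over a common quotient'' is a symmetric relation (inverting the local biholomorphisms $\phi_i$ gives the symmetric data), being a normal Stein space is a property of each space individually (and smooth manifolds as well as $G$-modules are normal Stein), and the genericity hypothesis is imposed on both spaces (indeed, as noted in Section \ref{s:generic}, genericity of one is equivalent to genericity of the other under our assumptions). The remaining hypothesis of Theorem \ref{t:K-contractible} is that the ``$X$'' be $K$-contractible, which is precisely what was established in the previous step.

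There is essentially no obstacle here; the only thing to be careful about is confirming that applying the theorem with the roles of the two spaces reversed is legitimate, which is immediate from the symmetric formulation of its hypotheses. The conclusion of the theorem then delivers a $G$-biholomorphism between $V$ and $X$, which is exactly the assertion of the corollary.
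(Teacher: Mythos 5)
Your proof is correct and is essentially the argument the paper intends: the text notes just before Theorem \ref{t:K-contractible} that a $K$-module is obviously $K$-contractible (via scalar multiplication, exactly as you verify), and the corollary follows by applying the theorem with the $G$-module in the role of the $K$-contractible space. Your explicit check that the remaining hypotheses are symmetric in the two spaces is a sound and welcome clarification, but not a different route.
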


\begin{corollary}  \label{c:Franks-dream}
A holomorphic $G$-action on $\C^n$, which is locally $G$-biholomorphic over a common quotient to a generic linear action, is linearisable.
\end{corollary}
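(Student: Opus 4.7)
The plan is to obtain this as an immediate application of Corollary \ref{c:first-corollary}. Let $X$ denote $\C^n$ endowed with the given holomorphic $G$-action, and let $V$ denote $\C^n$ endowed with the generic linear $G$-action appearing in the hypothesis. By assumption, $X$ and $V$ are locally $G$-biholomorphic over a common quotient.

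Next I would check that every hypothesis of Corollary \ref{c:first-corollary} is in place. The manifold $X=\C^n$ is Stein and the $G$-action on $X$ is holomorphic. The action on $V$ is linear and generic. Moreover, as pointed out in the paragraph introducing the notion of genericity in Section \ref{s:introduction}, the assumption that $X$ and $V$ are locally $G$-biholomorphic over a common quotient already forces $X$ to be generic if and only if $V$ is, so both actions are generic. Corollary \ref{c:first-corollary} therefore produces a $G$-biholomorphism $\Phi\colon X\to V$.

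Finally, because $X$ and $V$ share the same underlying complex manifold $\C^n$, the map $\Phi$ is a holomorphic automorphism of $\C^n$ intertwining the given $G$-action with the linear action: for every $g\in G$ and every $x\in\C^n$ one has $\Phi(g\cdot_X x)=g\cdot_V \Phi(x)$, so that $g\cdot_X x = \Phi^{-1}(g\cdot_V \Phi(x))$. Conjugation by $\Phi$ therefore carries the given action into the subgroup $\mathrm{GL}_n(\C)\subset\Aut(\C^n)$, which is precisely the statement that the action is linearisable. There is no real obstacle at this stage; all the substantive work has been carried out in proving Theorem \ref{t:K-contractible} (which invokes the equivariant Oka principle of Heinzner and Kutzschebauch together with the $K$-contractibility of the $G$-module $V$), and the present corollary is just the specialisation of Corollary \ref{c:first-corollary} to $X=V=\C^n$.
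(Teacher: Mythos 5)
Your proof is correct and follows exactly the route the paper intends: the corollary is stated as an immediate consequence of Corollary \ref{c:first-corollary} (itself a special case of Theorem \ref{t:K-contractible}), applied with $X=\C^n$ carrying the given action and $V=\C^n$ carrying the generic linear action, the resulting $G$-biholomorphism being precisely the linearising automorphism. Your observation that genericity of $X$ is inherited from that of $V$ via the local $G$-biholomorphism hypothesis matches the remark in Section \ref{s:introduction}.
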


%%%%%%%%%%

\section{Strict equivariant diffeomorphisms}  \label{s:smooth}

\noindent
Let $X$ and $Y$ be Stein manifolds on which a reductive complex Lie group $G$ acts holomorphically, such that $X$ and $Y$ are locally $G$-biholomorphic over a common quotient $Q$ with quotient maps $p:X\to Q$ and $r:Y\to Q$.  A $G$-equivariant diffeomorphism $X\to Y$ which induces the identity on $Q$ and is biholomorphic from $p^{-1}(q)$ onto $r^{-1}(q)$ for every $q\in Q$ will be called a \textit{strict $G$-diffeomorphism} from $X$ to $Y$.  If a strict $G$-diffeomorphism $X\to Y$ exists, then we call $X$ and $Y$ \textit{strictly $G$-diffeomorphic}.

A $G$-equivariant diffeomorphism $\psi$ of $X$ is called \textit{special} if it is of the form $\psi (x) = \gamma (x) \cdot x$ for some smooth $G$-equivariant map $\gamma: X \to G$, where $G$ acts on the target $G$ by conjugation.  A $G$-equivariant diffeomorphism $\psi:X\to Y$ is called \textit{special} if for some open cover $(U_i)$ of $Q$ and $G$-biholomorphisms $\phi_i:p^{-1}(U_i)\to r^{-1}(U_i)$ over $U_i$, the diffeomorphisms $\phi_i^{-1} \circ \psi$ of $p^{-1}(U_i)$ are special.

If the action on $X$ (and thus on $Y$) is generic, then every $G$-biholomorphism over the quotient is  a special (and obviously a strict) $G$-diffeomorphism (Lemma \ref{l:g-biholomorphisms}).  Over the principal stratum of a generic action, the notions of a special and a strict $G$-diffeomorphism coincide.  Clearly, the special $G$-diffeomorphisms of $X$ form a group, and so do the strict ones.

If the action is generic, then the definition of a special $G$-diffeomorphism $X\to Y$ does not depend on the choice of the cover $(U_i)$ of $Q$ and the $G$-biholomorphisms $\phi_i:p^{-1}(U_i)\to r^{-1}(U_i)$ over $U_i$, since any $G$-biholomorphism over the quotient is special, and the special $G$-diffeomorphisms form a group.

\begin{theorem}  \label{t:strict-diffeomorphisms}
Let $G$ be a reductive complex Lie group acting holomorphically and generically on Stein manifolds $X$ and $Y$, which are locally $G$-biholomorphic over a common quotient.  Every strict $G$-diffeomorphism $X\to Y$ is homotopic, via a continuous path of strict $G$-diffeomorphisms, to a special (and strict) $G$-diffeomorphism.
\end{theorem}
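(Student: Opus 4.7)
Plan.

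The strategy is to localise via the slice theorem, construct a homotopy on each slice model, and patch globally using a partition of unity on $Q$. Using the local $G$-biholomorphisms $\phi_i:p^{-1}(U_i)\to r^{-1}(U_i)$ expressing the local biholomorphy of $X$ and $Y$, reduce to deforming each $\phi_i^{-1}\circ\psi|_{p^{-1}(U_i)}$ through strict $G$-diffeomorphisms of $p^{-1}(U_i)$ over $U_i$ to a special one. By the smooth analogue of Lemma \ref{l:g-biholomorphisms}, on $X_\mathrm{pr}$ every strict $G$-diffeomorphism over $Q$ is automatically of the form $x\mapsto\gamma_\mathrm{pr}(x)\cdot x$ for a unique smooth $G$-equivariant $\gamma_\mathrm{pr}:X_\mathrm{pr}\to G$ (conjugation action), so the obstruction to $\psi$ being special is the failure of $\gamma_\mathrm{pr}$ to extend smoothly across $X\setminus X_\mathrm{pr}$, and the homotopy is designed to remove this obstruction.

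Work locally in the slice model $p^{-1}(U_i)\cong G\times^{H_i}S_i$. A strict $G$-diffeomorphism is determined by its $H_i$-equivariant restriction to $S_i$, a smooth map preserving the fibres of the $H_i$-quotient biholomorphically. Using the linear structure of the slice representation $W_i\supset S_i$, introduce the $H_i$-equivariant radial retractions $\mu_t(s)=ts$ and form the family $\psi_t=\mu_t^{-1}\circ\psi\circ\mu_t$. This is strict for each $t\in(0,1]$ and extends continuously at $t=0$ to the linearisation $d\psi|_0$, an $H_i$-equivariant $\C$-linear automorphism of $G\times^{H_i}W_i$. The remaining task is to deform this linear automorphism, through $H_i$-equivariant fibre-preserving automorphisms of $G\times^{H_i}W_i$, to one induced by left multiplication by an element of the centraliser $Z_G(H_i)$; such elements correspond to special $G$-diffeomorphisms. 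Since $\Aut^G(G\times^{H_i}W_i)$ is a complex algebraic group containing the image of $N_G(H_i)$ with a connected quotient (verified using the explicit structure of $G$-equivariant automorphisms of slice models, as in Example \ref{e:canonical-bad-example}), a smooth path joins the two, completing the local homotopy.

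For globalisation, choose a locally finite refinement of the slice cover and a smooth partition of unity $\{\rho_i\}$ on $Q$ subordinate to it. Combine the local homotopies by iterated composition, exploiting that strict and special $G$-diffeomorphisms each form a group and that small special corrections can be interpolated via exponential coordinates on $G$. The main obstacle will be precisely this patching step: one must ensure that the concatenated deformation remains a strict $G$-diffeomorphism at every time and that the final diffeomorphism is genuinely special with respect to the chosen local trivialisations. This requires shrinking supports so that the equivariant exponential map on $G$ is a diffeomorphism on the relevant neighbourhoods, and verifying overlap compatibility on each $U_i\cap U_j$ using the cocycle structure of the $\phi_i$, which is where the genericity assumption enters again through the identification of sections of $\A$ with $G$-equivariant maps to $G$.
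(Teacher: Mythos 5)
Your local deformation is the same as the paper's: conjugate $\psi$ by the scaling $[g,w]\mapsto[g,tw]$ in the slice model and let $t\to 0$, so that the limit is the fibre derivative along the closed orbit. But there are two genuine gaps. First, you assert that the limit is a $\C$-linear automorphism and then propose a further deformation inside $\Aut^G(G\times^{H_i}W_i)$ to reach an element of the centraliser $Z_G(H_i)$, relying on a connectivity claim for that automorphism group that you do not prove and that is exactly the kind of statement Example \ref{e:canonical-bad-example} warns about. Neither step is needed, but the real content you are missing is the reason the limit is special at all: a priori $d\psi|_0$ is only $\R$-linear, because $\psi$ is merely smooth. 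The paper's argument is that $\psi$, being strict, restricts to a biholomorphism of the null-cone fibre $G\times^H\N(W)$, and since $W^H=0$ the Zariski tangent space of $\N(W)$ at the origin is all of $W$; hence the fibre derivative is complex-linear, hence holomorphic, hence special by Lemma \ref{l:g-biholomorphisms} (and an explicit computation shows it is already given by an element of the centraliser of $H$, so no further deformation is required). You also scale the whole slice $S_i$ rather than only the nontrivial part $W'$ of the slice representation; scaling the $W^{H_i}$-directions collapses the stratum direction in the limit, which is why the paper works in the model $C\times(G\times^H B)$ with $B\subset W'$ and scales only the normal direction, keeping $C$ as a parameter.

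Second, the globalisation is not carried out. A partition of unity on $Q$ cannot be used to interpolate special corrections via exponential coordinates: the corrections are $G$-equivariant maps into the non-abelian group $G$ with the conjugation action, and averaging them is not meaningful; moreover your local homotopies are not the identity outside compact sets, so iterated composition over a locally finite cover does not make sense as stated. The paper resolves this by inserting the cutoff into the scaling parameter itself, $\tau(t,z)=1+(t-1)\rho(z)$ (Corollary \ref{c:phi'}), which makes each local homotopy the identity outside the support of $\rho$ while keeping every stage a strict $G$-diffeomorphism, and by proving that such a deformation preserves specialness on any $G$-saturated set invariant under the modified scaling (Lemma \ref{l:special-remains}). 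The global argument is then a double induction: an outer finite induction over the dimension of the Luna strata and an inner countable induction over a locally finite cover of each stratum by compact sets, with care taken that the neighbourhoods on which specialness has been achieved do not shrink away. You correctly identify the patching as the main obstacle, but the mechanism you sketch would not close it.
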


By the discussion preceding Theorem \ref{t:topological-obstruction}, the obstruction to $X$ and $Y$ being $G$-biholomorphic over $Q$ vanishes if there is a special $G$-diffeomorphism $X\to Y$.  Hence the following corollary is immediate.

\begin{corollary}  \label{c:strict-diffeomorphisms}
Let $G$ be a reductive complex Lie group acting holomorphically and generically on Stein manifolds $X$ and $Y$, which are locally $G$-biholomorphic over a common quotient.  If $X$ and $Y$ are strictly $G$-diffeomorphic, then they are $G$-biholomorphic.
\end{corollary}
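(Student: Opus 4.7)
\medskip

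\noindent\textbf{Proof proposal.}  The plan is to extract the corollary from Theorem \ref{t:strict-diffeomorphisms} together with the machinery set up before Theorem \ref{t:topological-obstruction}; there is essentially nothing new to do beyond unpacking definitions and quoting the equivariant Oka principle of Heinzner and Kutzschebauch.

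First I would apply Theorem \ref{t:strict-diffeomorphisms} to the given strict $G$-diffeomorphism $X\to Y$ to produce a special $G$-diffeomorphism $\psi:X\to Y$.  Fixing an open cover $(U_i)$ of $Q$ and $G$-biholomorphisms $\phi_i:p^{-1}(U_i)\to r^{-1}(U_i)$ as in the definition of \emph{special}, we obtain smooth $G$-equivariant maps $\gamma_i:p^{-1}(U_i)\to G$ (with $G$ acting on the target by conjugation) such that $\phi_i^{-1}\circ\psi(x)=\gamma_i(x)\cdot x$ on $p^{-1}(U_i)$.

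Next I would translate this into a statement about the cocycle $(\psi_{ij})$.  Comparing the identities $\phi_i(\gamma_i(x)\cdot x)=\psi(x)=\phi_j(\gamma_j(x)\cdot x)$ on $p^{-1}(U_i\cap U_j)$ and using Lemma \ref{l:g-biholomorphisms} to view $\psi_{ij}=\phi_i^{-1}\circ\phi_j$ as a $G$-equivariant map into $G$ (conjugation action), one checks that the $\gamma_i$ split the cocycle $(\psi_{ij})$ in the category of smooth $G$-equivariant maps into $G$.  Equivalently, the holomorphic principal $G$-$G$-bundle $E$ over $X$ constructed in Section \ref{s:generic} from $(\psi_{ij})$ is smoothly $G$-$G$-trivial; in particular it is topologically $K$-$G$-trivial, where $K\subset G$ is a maximal compact subgroup.

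The final step is to invoke the equivariant Oka--Grauert principle of Heinzner and Kutzschebauch \cite{Heinzner-Kutzschebauch}, exactly as in the discussion preceding Theorem \ref{t:topological-obstruction}: topological triviality of $E$ as a principal $K$-$G$-bundle implies its holomorphic triviality as a principal $G$-$G$-bundle, which is equivalent to the cocycle $(\psi_{ij})$ splitting holomorphically, and hence to the existence of a global $G$-biholomorphism $X\to Y$ over $Q$.  The only potential obstacle I anticipate is making sure that the smooth $G$-equivariance of the $\gamma_i$ really does give a genuine $K$-equivariant topological trivialisation in the precise sense required by \cite{Heinzner-Kutzschebauch}, but this is a purely formal verification: smooth implies continuous, and $G$-equivariance for the conjugation action restricts to $K$-equivariance.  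All the substantive work is buried in Theorem \ref{t:strict-diffeomorphisms} and in the Heinzner--Kutzschebauch theorem itself.
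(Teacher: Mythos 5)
Your proposal is correct and follows exactly the paper's route: apply Theorem \ref{t:strict-diffeomorphisms} to replace the strict $G$-diffeomorphism by a special one, observe that the resulting equivariant maps $\gamma_i$ split the cocycle topologically, and conclude via the Heinzner--Kutzschebauch equivariant Oka principle as in the discussion preceding Theorem \ref{t:topological-obstruction}. The paper states this in one sentence; your version merely spells out the same verification.
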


The remainder of this section is devoted to the proof of the theorem.  We start by constructing the desired homotopy in a particular local setting.  Let $H$ be a reductive subgroup of $G$, and let $W$ be an $H$-module, not necessarily generic, such that $W^H=0$.  Let  $p_1,\ldots,p_k$ be homogeneous generators of $\O_\mathrm{alg}(W)^H$ of degrees $d_1, \ldots, d_k$ respectively, and let $B=\{w\in W : \lvert p_i\rvert<a_i\}$ for some $a_1,\ldots,a_k>0$.  Let $T_W$ (resp.\ $T_B$) be the tube $G\times^HW$ (resp.\ $G\times^HB$).  Then $T_W$ and $T_B$ are bundles over $G/H$ with fibres $W$ and $B$, respectively.  Below, when talking about derivatives in the fibre-direction, we mean the fibres of these bundles.  Note that the $G$-action on $T_W$ is generic if and only if the $H$-action on $W$ is generic.

The null fibre of the quotient map $T_B \to T_B\sl G\cong B\sl H$ is $G\times^H\N(W)$, where $\N (W)$ denotes the null cone of the $H$-representation on $W$.  The unique closed orbit in the null fibre is the zero section $Z$ of $T_W$.  Now let $\phi:T_B\to T_B$ be a strict $G$-diffeomorphism.  Then $\phi$ must preserve $Z$.  Let $\delta\phi:T_W\to T_W$ denote the derivative of $\phi$ in the fibre-direction along $Z$.

For $t\in \C^*$, denote by $\alpha_t : T_W\to T_W$  the $G$-biholomorphism   defined by $\alpha_t ([g, w])= [g,tw]$.  Note that $p_1,\ldots,p_k$ correspond to generators $F_1,\ldots,F_k$ of $\O_\mathrm{alg}(T_W)^G$ with $F_i ([g, w])= p_i (w)$, where $F_i \circ \alpha_t = t^{d_i} F_i$.  Let $\phi_t = \alpha_t^{-1} \circ \phi \circ \alpha_t$ for $ t>0$, and let $\phi_0=\delta\phi$.  

\begin{lemma} 
The family $\phi_t$, $t\in[0,1]$, is a homotopy of strict $G$-diffeomorphisms of $T_B$ joining $\phi$ to $\delta \phi$.   If the $H$-action on $W$ is generic, then $\phi_0$ is a special (and strict) $G$-diffeomorphism.
\end{lemma}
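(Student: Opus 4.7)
My plan is to verify the claimed properties in sequence.

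First, for $t \in (0,1]$, the identity $F_i \circ \alpha_t = t^{d_i} F_i$ shows that $\alpha_t$ maps $T_B$ into the sub-tube $\{\lvert F_i \rvert < t^{d_i} a_i\} \subseteq T_B$, which $\phi$ preserves as it is over the quotient; applying $\alpha_t^{-1}$ returns us to $T_B$.  So $\phi_t := \alpha_t^{-1} \circ \phi \circ \alpha_t$ is a well-defined $G$-equivariant map $T_B \to T_B$.  The direct computation $F_i \circ \phi_t = t^{-d_i}(F_i \circ \phi \circ \alpha_t) = t^{-d_i}(F_i \circ \alpha_t) = F_i$ shows $\phi_t$ is over the quotient, and biholomorphicity on each fiber of $T_B \to B \sl H$ is inherited from $\alpha_t$ and $\phi$.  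Invertibility comes from $\phi_t^{-1} = \alpha_t^{-1} \circ \phi^{-1} \circ \alpha_t$, so $\phi_t$ is a strict $G$-diffeomorphism.

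Next, to establish $C^\infty$-continuity at $t = 0$ and identify the limit as $\delta\phi$, I would work in a local orbit-representative form, $\phi([g,w]) = [g\,a(w),\,b(w)]$ with $a : W \to G$ and $b : W \to W$ smooth, $b(0) = 0$, and $a(0) \in N_G(H)$.  Then $\phi_t([g,w]) = [g\,a(tw),\,b(tw)/t]$, and a first-order Taylor expansion yields $C^\infty$-convergence to $[g\,a(0),\,Lw]$ with $L = D_w b(0)$, matching the definition $\delta\phi([g,w]) = [g\,a(0),\,Lw]$.  As a $C^\infty$-limit of strict $G$-diffeomorphisms of $T_B$, $\delta\phi$ is itself a strict $G$-diffeomorphism, its invertibility following from invertibility of $L$.

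For the special-ness of $\phi_0 = \delta\phi$ under the genericity hypothesis on $(H,W)$, I would exhibit an explicit smooth $G$-equivariant $\gamma : T_W \to G$ (with conjugation on the target) such that $\delta\phi(x) = \gamma(x) \cdot x$.  Genericity makes the representation $\rho : H \to \mathrm{GL}(W)$ faithful and the $H$-action free on $W_{\mathrm{pr}}$, so $G$ acts freely on $(T_W)_{\mathrm{pr}}$.  By the smooth analog of Lemma \ref{l:g-biholomorphisms}, on the principal part there is a unique smooth $G$-equivariant $\tilde\gamma : (T_B)_{\mathrm{pr}} \to G$ with $\phi(x) = \tilde\gamma(x) \cdot x$; in our coordinates, $\tilde\gamma([g,w]) = g\,a(w)\,h(w)\,g^{-1}$, where $h(w) \in H$ is the unique element with $\rho(h(w))\,w = b(w)$.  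Using the $G$-equivariance of $\alpha_t$, one gets $\phi_t(x) = \tilde\gamma(\alpha_t x) \cdot x$ on the principal part, and letting $t \to 0^+$ reduces the question to analyzing the limit of $h(tw)$.

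The main obstacle is to show $L = \rho(h_0)$ for a single $h_0 \in H$ with $a(0)\,h_0 \in Z_G(H)$.  From $b(tv)/t \to Lv$ together with $b(tv) = \rho(h(tv))(tv)$ one obtains $\rho(h(tv))\,v \to Lv$ for every $v \in W_{\mathrm{pr}}$, whence (by freeness) $h(tv) \to h_v \in H$ with $\rho(h_v)\,v = Lv$.  Linearity of $L$ combined with codimension-$2$ genericity (which permits $v_1, v_2, v_1 + v_2 \in W_{\mathrm{pr}}$ simultaneously) forces $h_v$ to be independent of $v$, so $L = \rho(h_0)$ for a single $h_0 \in H$.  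The equivariance $L \circ \rho(k) = \rho(a(0)^{-1}\,k\,a(0)) \circ L$ (from the consistency of $\phi$ under $H$-equivalence) translates, via faithfulness of $\rho$, into $a(0)\,h_0 \in Z_G(H)$.  The map $\gamma([g,w]) := g\,a(0)\,h_0\,g^{-1}$ is then well-defined on $T_W$ (precisely because $a(0)h_0$ centralises $H$), smooth, and $G$-equivariant, and $\gamma(x) \cdot x = [g\,a(0)\,h_0,\,w] = [g\,a(0),\,\rho(h_0)w] = \delta\phi(x)$ by direct computation; restricting to $T_B$ exhibits $\delta\phi$ as special.
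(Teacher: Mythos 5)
Your treatment of the first assertion is sound and essentially matches the paper's: the computation $F_i\circ\phi_t=F_i$ shows that each $\phi_t$, $t\in(0,1]$, is a strict $G$-diffeomorphism of $T_B$, and differentiability of $\phi$ gives the locally uniform limit $\phi_0=\delta\phi$. (Your observation that $\alpha_t(T_B)$ is the $G$-saturated sub-tube $\{\lvert F_i\rvert<t^{d_i}a_i\}$, which $\phi$ preserves, is a detail the paper leaves implicit.)

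The specialness argument, however, has a genuine gap, and it departs from the paper at exactly the point where the hypothesis that $\phi$ is \emph{strict} must be used. You work only over the principal part, write $\phi(x)=\tilde\gamma(x)\cdot x$ there, and try to show directly that the element $h_v\in H$ with $\rho(h_v)v=Lv$ is independent of $v\in W_{\mathrm{pr}}$. The justification offered --- linearity of $L$ together with the fact that $v_1$, $v_2$, $v_1+v_2$ can all be taken principal --- only yields the relation $(\rho(h_{v_1})-\rho(h_{v_1+v_2}))v_1=(\rho(h_{v_1+v_2})-\rho(h_{v_2}))v_2$, which does not force $h_{v_1}=h_{v_1+v_2}$ when $\dim H>0$; nor does continuity of $v\mapsto h_v$ on the connected set $W_{\mathrm{pr}}$, since the level sets $\{v: h_v=h\}=W_{\mathrm{pr}}\cap\ker(\rho(h)-L)$ are closed but need not be open. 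Note that nowhere in this part do you use that $\phi$ restricts to a biholomorphism of the null fibre $G\times^H\N(W)$. That is the paper's key point: by Lemma \ref{l:g-biholomorphisms}, it suffices to show that $\phi_0$ is holomorphic, that is, that $L$ is complex-linear rather than merely real-linear; and this follows because $\phi$ is holomorphic on the null fibre, whose Zariski tangent space at the origin is all of $W$ precisely because $W^H=0$. The constancy statement you are aiming for (that $\tilde\gamma$ is conjugation by a fixed element centralising $H$) is derived in the paper only \emph{after} specialness is known, using that $\gamma$ is then defined on all of $T_W$ and not just over the principal stratum, so that one may let $w\to 0$ in $\gamma([e,tw])\cdot[e,w]=\gamma([e,w])\cdot[e,w]$; your $\tilde\gamma$ lives only on the principal part, so this step is unavailable to you. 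A related, smaller issue: your claim that $\delta\phi$ is strict \lq\lq as a $C^\infty$-limit of strict $G$-diffeomorphisms\rq\rq\ also requires an argument (either the complex-linearity of $L$, or the remark that locally uniform limits of fibrewise holomorphic maps are fibrewise holomorphic), which you do not supply.
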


\begin{proof}
Since $\alpha_t$ is holomorphic, the maps $\phi_t : T_W\to T_W$,   $t\in(0,1]$, are strict $G$-diffeomorphisms.  Moreover, they induce the identity map on the quotient since they preserve the invariants:
\begin{align*}
F_i ( \alpha_t^{-1} \circ \phi \circ \alpha_t ([g, w])) &= t^{-d_i} F_i (\phi \circ  \alpha_t ([g, w])) =  t^{-d_i} F_i ( \alpha_t ([g, w])) \\ &=  t^{-d_i} t^{d_i}F_i ([g, w]) =  F_i ([g, w]).
\end{align*}
This shows, in particular, that for every $t\in (0,1]$, $\phi_t$ is a strict $G$-diffeomorphism of $T_B$ and that $\phi_0$ preserves the invariants. 
By differentiability of $\phi$,   $\lim\limits_{t\to 0} \phi_t$ exists locally uniformly on $T_B$ and equals $\phi_0=\delta\phi $.  Therefore, $\phi_0$ is a $G$-diffeomorphism over the quotient. 

By Lemma \ref{l:g-biholomorphisms}, to prove that $\phi_0$ is special in the generic case, it suffices to show that $\phi_0$ is holomorphic, that is, that the the derivative of $\phi$ in the fibre-direction along $Z$ is complex-linear and not merely real-linear.  By assumption, $\phi$ restricts to a $G$-biholomorphism of $G\times^H \N(W)$, so it has a complex-linear derivative in the fibre-direction along the Zariski tangent space of $\N(W)$.  Since $W^H=0$ by assumption, the Zariski tangent space at $0\in W$ to $\N(W)$ is all of $W$.  This shows that $\phi_0$ is complex-linear.
\end{proof}

We can explicitly describe the form of $\phi_0$.  We have $\phi_0([g,w])=\gamma([g,w])\cdot ([g,w])$ with $\gamma:G\times^HW\to G$ equivariant and algebraic.  Then $\phi_0([e,hw])=h\delta\phi[(e,w])h\inv$, so that $\gamma([e,w]):W\to G$ is $H$-equivariant.  But for $t\in\R$, $\phi_0([e,tw])=t\delta\phi([e,w])$, so 
\[ t[\gamma([e,tw]),w]=[\gamma([e,tw]),tw]=t[\gamma([e,w]),w]. \]
Hence $\gamma([e,w])\cdot [e,w]=\gamma([e,0])\cdot [e,w]$, where $\gamma([e,0])$ is an element $g_0$ of $G$ centralising $H$.  Thus $\gamma([g,w])([g,w])=([gg_0,w])$ and $\gamma([g,w])=gg_0g\inv$.  The element $g_0$ is unique, since for $w$ a principal point of $W$, $\gamma([e,w])$ is unique (and equal to $g_0$).  It is easy to see that any choice of $g_0$ in the centraliser of $H$ in $G$ gives an equivariant $\gamma:G\times^H W \to G$. 

Next we describe the homotopy in the general local setting.  In addition to the notation above, let $C$ be a Stein manifold (with a trivial $G$-action), and let $ \phi:C \times T_B\to C\times T_B$ be a strict $G$-diffeomorphism.  Since $\phi$ induces the identity on the quotient, it is of the form $\phi (c, [g, w]) = (c, \tilde \phi (c, [g,w]))$, where $\tilde \phi : C \times T_B \to T_B$ may be viewed as a smooth family of strict $G$-diffeomorphisms of $T_B$ parametrised by $C$.  Let $\delta \tilde  \phi : C \times T_W\to T_W$ be the derivative of $\tilde \phi$ in the fibre-direction along the zero section of $T_W$, and set $\delta\phi (c, [g,w]) = (c, \delta \tilde \phi (c, [g,w]))$.

Applying the above homotopy in this parametrised setting, we get the following result.  We let $\alpha_t$ act on $C\times T_B$ as the identity on the first factor and as defined above on the second factor.

\begin{lemma}  \label{l:parameters}
The family $\phi_t : C \times T_B \to C\times T_B$, $t\in[0,1]$, with 
\[ \phi_t (c, [g, w])=(c, \alpha_t^{-1} \circ \tilde \phi \circ \alpha_t (c, [g,w]))\] 
for $t>0$ and 
\[\phi_0 (c, [g, w])=(c, \delta \tilde \phi (c, [g, w])),\] 
is a homotopy of strict $G$-diffeomorphisms joining $\phi$ to $\delta \phi $.  If the $G$-action on $T_B$ (equivalently, on $C \times T_B$) is generic, then $\phi_0$ is a special (and strict) $G$-diffeomorphism.
\end{lemma}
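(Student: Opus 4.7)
The plan is essentially to apply the previous (unparametrised) lemma fibrewise over $C$, treating $c\in C$ as a passive smooth parameter. Since $\alpha_t$ by definition acts trivially on $C$ and the invariants $F_i$ depend only on the $T_W$-factor, every identity and estimate used in the previous proof carries over at each fixed $c$, with smooth dependence on $c$ provided for free by the smoothness of $\tilde\phi$.

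I would first verify that, for each $t>0$, $\phi_t$ is a strict $G$-diffeomorphism of $C\times T_B$. Smoothness and $G$-equivariance are clear, and the strict property holds because the fibres of the quotient map $C\times T_B\to C\times (T_B\sl G)$ are of the form $\{c\}\times r^{-1}(q)$, on each of which both $\phi$ and $\alpha_t$ restrict to biholomorphisms. The computation
\[
F_i\bigl(\alpha_t^{-1}\circ\tilde\phi\circ\alpha_t(c,[g,w])\bigr) = t^{-d_i}F_i\bigl(\tilde\phi\circ\alpha_t(c,[g,w])\bigr) = t^{-d_i}t^{d_i}F_i(c,[g,w]) = F_i(c,[g,w])
\]
of the previous lemma then shows, at each $c$, that $\phi_t$ descends to the identity on $C\times (T_B\sl G)$. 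For the limit $t\to 0$, Taylor's theorem applied in the fibre-direction, with remainder estimates depending continuously on $c$, gives locally uniform convergence of $\phi_t$ to $\phi_0(c,[g,w])=(c,\delta\tilde\phi(c,[g,w]))$ on compacta in $C\times T_B$; preservation of invariants and $G$-equivariance then pass to the limit, so $\phi_0$ is a strict $G$-diffeomorphism.

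For the generic case, it only remains to show that $\phi_0$ is special. By Lemma \ref{l:g-biholomorphisms} applied at each $c$, it suffices to check that $\delta\tilde\phi(c,\cdot)$ is holomorphic in the fibre-direction; the resulting smooth $G$-equivariant map $\gamma:C\times T_W\to G$ with $\phi_0(c,x)=(c,\gamma(c,x)\cdot x)$ will then exhibit the required special structure. Holomorphy is verified as in the previous proof: $\phi$ restricts to a $G$-biholomorphism of $C\times(G\times^H\N(W))$, so its fibre-direction derivative along the zero section is complex-linear on the Zariski tangent space to $\N(W)$ at $0$, which equals $W$ since $W^H=0$. The only delicate point is the joint locally uniform convergence as $t\to 0$ in both $c$ and $[g,w]$, and this is standard. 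No step presents a substantive obstacle; the lemma is a bookkeeping exercise that packages the previous one so that it runs in smooth families.
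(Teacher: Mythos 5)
Your proposal is correct and matches the paper's treatment: the paper offers no separate argument for this lemma, simply noting that the preceding unparametrised homotopy applies verbatim with $c\in C$ carried along as a passive parameter, which is exactly what you do. The details you supply (strictness via the invariance computation, locally uniform convergence from differentiability of $\tilde\phi$, and specialness via complex-linearity of the fibre derivative on the Zariski tangent space of the null cone using $W^H=0$) are the same ones used in the paper's proof of the unparametrised lemma.
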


We now want to change our homotopy so that it is the identity away from a neighbourhood  of a given point.  Let $c_0\in C$ and let $\rho: C\times T_B\sl G\to[0,1]$ be a smooth function which is 1 on a neighbourhood $U$ of $p(\{c_0\}\times Z)$ and has compact support $F$.  Let $\tau(t,z)=1+(t-1)\rho(z)$ for $t\in[0,1]$ and $z\in C\times T_B\sl G$.  Then $\tau(t,z)=1$ for $z\notin F$, and $\tau(t,z)=t$ for $z\in U$.  Let $\tau(t,z)\cdot (c,[g,w])$ denote $(c,[g,\tau(t,z)w])$ for $(c,[g,w])\in C\times T_B$, where $z=p(c,[g,w])$.

\begin{corollary}  \label{c:phi'}
Let $\rho$ be as above and let $\phi:C\times T_B \to C\times T_B$ be a strict $G$-diffeo\-morphism.  Let $\phi_t(x)=\tau(t,z)\inv\cdot \phi(\tau(t,z)\cdot x)$ for $x\in C\times T_B$, $z=p(x)$, and $t\in(0,1]$.  Set $\phi_0=\lim\limits_{t\to 0}\phi_t$.  The family $\phi_t$, $t\in[0,1]$, is a homotopy of strict $G$-diffeomorphisms joining $\phi$ to $\phi_0$.  Moreover, for each $t\in[0,1]$, $\phi_t$ equals $\phi$ over the complement of $F$, and $\phi_0$ equals $\delta\phi$ over $U$.
\end{corollary}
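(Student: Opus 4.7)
The plan is to unpack the definition of $\phi_t$ and verify each assertion, with the key analytic input being that the map $\Psi(\lambda,x)=\lambda^{-1}\cdot\phi(\lambda\cdot x)$, a priori defined only for $\lambda>0$, extends smoothly to $\lambda\geq 0$ with value $\delta\phi(x)$ at $\lambda=0$. This is precisely the content of Lemma \ref{l:parameters}: the existence of $\lim_{t\to 0}\alpha_t^{-1}\circ\phi\circ\alpha_t=\delta\phi$ rests only on $\phi$ being smooth and preserving the zero section $Z$, and the same argument works with $\lambda$ in place of $t$.

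First I would observe that the fiberwise scaling $(c,[g,w])\mapsto(c,[g,\lambda w])$ multiplies the invariant $F_i$ by $\lambda^{d_i}$, so, since $\phi$ is strict and $\tau(t,z)$ depends only on $z=p(x)$, the point $\phi_t(x)=\tau(t,z)^{-1}\cdot\phi(\tau(t,z)\cdot x)$ has the same quotient as $x$. For $t\in(0,1]$ the scalar $\tau(t,z)$ is positive everywhere, so $\phi_t$ is the composition of a strict $G$-diffeomorphism with two $G$-equivariant fiberwise biholomorphic scalings, hence itself a strict $G$-diffeomorphism; evidently $\phi_1=\phi$, and outside $F$ we have $\rho=0$, $\tau\equiv 1$, and $\phi_t=\phi$ for every $t$. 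Writing $\phi_t(x)=\Psi(\tau(t,z),x)$ and invoking the smoothness of $\Psi$ at $\lambda=0$ together with the smoothness of $\tau$ in $(t,z)$ shows that the family extends smoothly to $t\in[0,1]$. On $U$ we have $\rho\equiv 1$ and therefore $\tau(0,z)=0$, giving $\phi_0(x)=\Psi(0,x)=\delta\phi(x)$, as required.

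It remains to check that $\phi_0$ is genuinely a strict $G$-diffeomorphism of $C\times T_B$: on $\{\rho<1\}$ we have $\tau(0,z)=1-\rho(z)>0$ and $\phi_0$ is the same composition as for $t>0$, while on $\{\rho=1\}$ it equals $\delta\phi$, which is a strict $G$-diffeomorphism by Lemma \ref{l:parameters}; compatibility across the interface is automatic from the smooth extension of $\Psi$. I expect no substantive obstacle, as the only analytic content, namely the existence and smoothness of the limit $\Psi(0,x)=\delta\phi(x)$, is already supplied by Lemma \ref{l:parameters}; the rest is organizing the pieces.
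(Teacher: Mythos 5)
Your argument is correct and is essentially the proof the paper intends: the corollary is stated there without proof, as the base-dependent version of Lemma \ref{l:parameters} obtained by exactly your substitution $\lambda=\tau(t,z)$ into $\Psi(\lambda,x)=\lambda^{-1}\cdot\phi(\lambda\cdot x)$, and all your verifications (invariance of the quotient, $\phi_t=\phi$ off $F$, $\phi_0=\delta\phi$ on $U$) match. Two small points deserve a line of care: Lemma \ref{l:parameters} literally provides locally uniform convergence of $\Psi(\lambda,\cdot)$ as $\lambda\to 0$ rather than joint smoothness at $\lambda=0$, so the smoothness of $\phi_0$ at boundary points of $\{\rho=1\}$ needs the (true, and easy via a Hadamard-lemma expansion of the fibre component of $\phi$ along the zero section) upgrade you assert; and the scalings $x\mapsto\tau(t,z)\cdot x$ are not injective on $C\times T_B$, so \lq\lq composition of diffeomorphisms\rq\rq\ should be read fibrewise over the identity of the quotient, with the inverse exhibited as $(\phi^{-1})_t$, as your phrase \lq\lq fiberwise biholomorphic scalings\rq\rq\ already suggests.
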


\begin{lemma}  \label{l:special-remains} 
If in the above situation there is a $G$-saturated open subset $\Omega$ of $C\times T_B$ which is  invariant under $x\mapsto \tau(t,z)\cdot x$   such that the restriction of $\phi$ to $\Omega$ is special, then all the strict $G$-diffeomorphisms $\phi_t$ are special when restricted to $\Omega$.
\end{lemma}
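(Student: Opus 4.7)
The plan is to exhibit, for each $t\in[0,1]$, a smooth $G$-equivariant map $\gamma_t:\Omega\to G$ (with $G$ acting on itself by conjugation) such that $\phi_t(x)=\gamma_t(x)\cdot x$ for all $x\in\Omega$. By the specialness hypothesis there is such a map $\gamma:\Omega\to G$ with $\phi(y)=\gamma(y)\cdot y$ for $y\in\Omega$, and the correct choice will turn out to be
$$\gamma_t(x)=\gamma(\tau(t,z)\cdot x),\qquad z=p(x).$$

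The key observation driving the derivation is that the real-scaling action $x\mapsto \tau(t,z)\cdot x$, which acts fibrewise on $C\times T_B$ via $(c,[g,w])\mapsto (c,[g,\tau(t,z)w])$, commutes with the $G$-action on $C\times T_B$, because $G$ acts by left multiplication in the $g$-factor of $G\times^H W$. Consequently, for $x\in\Omega$ and $t\in(0,1]$, applying specialness at the point $\tau(t,z)\cdot x$ (which lies in $\Omega$ by the invariance hypothesis) and then sliding $\tau(t,z)^{-1}$ past $\gamma(\tau(t,z)\cdot x)$ gives
$$\phi_t(x)=\tau(t,z)^{-1}\cdot\bigl(\gamma(\tau(t,z)\cdot x)\cdot\tau(t,z)\cdot x\bigr)=\gamma(\tau(t,z)\cdot x)\cdot x.$$
At $t=0$, where $\tau(0,z)=1-\rho(z)$ may vanish and the scaling is not literally invertible, I would invoke the fact that $\phi_0$ is defined as the limit $\lim_{t\to 0^{+}}\phi_t$ (established in Corollary \ref{c:phi'}); continuity of $\gamma$ together with $\tau(t,z)\cdot x\to\tau(0,z)\cdot x\in\Omega$ then forces $\phi_0(x)=\gamma(\tau(0,z)\cdot x)\cdot x$, so the single formula for $\gamma_t$ covers $t=0$ as well.

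The remaining task is to verify that $\gamma_t:\Omega\to G$ is smooth and $G$-equivariant. Smoothness is immediate by composition, since $(t,x)\mapsto \tau(t,p(x))\cdot x$ and $\gamma$ are smooth. For equivariance, note that $z=p(x)$ is $G$-invariant, so commutativity gives $\tau(t,z)\cdot(hx)=h\cdot(\tau(t,z)\cdot x)$, and the $G$-equivariance of $\gamma$ then yields
$$\gamma_t(hx)=\gamma\bigl(h\cdot(\tau(t,z)\cdot x)\bigr)=h\,\gamma(\tau(t,z)\cdot x)\,h^{-1}=h\,\gamma_t(x)\,h^{-1}.$$

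I do not anticipate a serious obstacle: the whole argument is a direct substitution resting on the (essentially trivial) fact that real scaling in the $W$-direction commutes with the $G$-action on $T_B$, combined with the continuity built into the definition of $\phi_0$. The mild subtlety is the non-invertibility of $\tau(0,z)$, handled by passing to the limit rather than manipulating the formula at $t=0$.
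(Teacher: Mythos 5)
Your proposal is correct and follows essentially the same route as the paper: the paper's proof also defines $\gamma_t(x)=\gamma(\tau(t,z)\cdot x)$, notes it is well defined because $\Omega$ is invariant under the scaling, and asserts smoothness and equivariance. You merely spell out the commutation of the scaling with the $G$-action and the limit argument at $t=0$, which the paper leaves implicit.
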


\begin{proof}
The assumption is that for $(c, [g, w]) \in \Omega$, we have $\phi (c, [g, w])  = (c, \gamma(c, [g, w]) \cdot [g, w])$, where $\gamma : \Omega \to G$ is smooth and $G$-equivariant.  It follows from the definition of $\phi_t$ that $\phi_t (c, [g, w])  = (c, \gamma_t (c, [g,  w])\cdot  [g, w])$, where $ \gamma_t (x) = \gamma (\tau(t,z)\cdot x)$, which is well defined by hypothesis.  The map $\gamma_t$ is clearly smooth and $G$-equivariant.
\end{proof}

\begin{proof}[Proof of Theorem \ref{t:strict-diffeomorphisms}] 
Let $\psi:X\to Y$ be a strict $G$-diffeomorphism.  Consider the stratification of $Q$ by the connected components of the Luna strata.  Let $S_k$ denote the union of the strata of dimension $k$.  We will inductively find homotopies of $\psi$ such that it becomes special over an open neighbourhood $\Omega$ of $S_0 \cup \cdots \cup S_k$.  Since over the principal stratum any strict $G$-diffeomorphism is special, we are done once we reach the case $k=\dim Q-1$.  Each step of the finite induction will be done by a  countable induction.   
 
Let $S$ be a stratum of minimal dimension.  Let $H$ be a corresponding stabiliser and let $(W,H)$ be the nontrivial part of the slice representation.  Cover $S$ by a locally finite collection of compact sets $K_i$, $i\in\mathbb N$, such that each $K_i$ lies in an open subset $U_i$ of $Q$ with $p\inv(U_i)\cong C_i\times (G\times^HB_i)$, where $C_i$ is a Stein open subset of $S$, and $B_i\subset W$ is as before.  We may assume that we have the same decomposition of $r\inv(U_i)$, so we may view $\psi$ over $U_i$ as a map of $C_i\times(G\times^HB_i)$ to itself.  We may also assume that any $K_j$ which intersects $K_i$ is contained in $U_i$.  By induction we suppose that there is a neighbourhood $\Omega_{n-1}$ of $K_1\cup\dots\cup K_{n-1}$ such that $\psi$ is strict and special on $p\inv(\Omega_{n-1})$.  Let $\rho_n(z)$ be smooth, $0\leq\rho_n\leq 1$, such that $\rho_n=1$ on a neighbourhood of $K_n$ and $\rho_n$ has compact support in $U_n$.  Then Corollary \ref{c:phi'} provides a homotopy from $\psi$ to $\psi'$, where $\psi'$ is special on a neighbourhood of $K_n$.  Now let $\beta_t(x)$ be the endomorphism of $U_n$ which is induced by the endomorphism of $p^{-1}(U_n)$ which sends $(c,[g,w])$ to $(c,[g,\tau(t,z)w])$, where $z$ is the image of $(c,[g,w])$ in $U_n$ and $\tau(t,z)=1+(t-1)\rho_n(z)$ as before.  Suppose that $K_j$, $j<n$, does not intersect $K_n$.  Then we may assume that $\rho_n$ vanishes on $(C_n\cap K_j)\times(G\times^HB_n)$, where we identify $C_n$ with its image in $S$.

Note that $\beta_t$ smoothly extends to be the identity outside of $U_n$. It  is also the identity on $U_{n}\cap S$ and on $K_1\cup\cdots\cup K_{n-1}$.  Thus there is a neighbourhood $\Omega_{n-1}'\subset\Omega_{n-1}$ of $K_1\cup\dots\cup K_{n-1}$ such that $\beta_t(z)$ maps $\Omega_{n-1}'$ into $\Omega_{n-1}$.  By Lemma \ref{l:special-remains}, $\phi'$ is special over $\Omega_{n-1}'\cup U_n'$, where $U_n'$ is a neighbourhood of $K_n$.  This gives us $\Omega_n$ and we continue inductively.  We could run into a problem if we are continually shrinking the neighbourhoods $\Omega_n$ near a point $z\in S$.  But there is a neighbourhood of $z$ which does not intersect the support of any $\rho_n$ for $n$ sufficiently large.  Thus for $n$ sufficiently large, each $\Omega_n$ contains a fixed neighbourhood of $z$.  Hence there is  a neighbourhood $\Omega$ of $S$ and a homotopy of $\phi$ to $\phi'$, such that $\phi'$ is strict and special over $\Omega$.  Since the strata of minimal dimension are closed and disjoint, we may find $\Omega$ as desired for all the strata of minimal dimension.

We may assume by induction that $\phi$ is special on a neighbourhood $\Omega$ of the union of the strata of dimension less than $m\geq 0$ (the union is a closed set).  Let $\Omega_1$ and $\Omega_2$ be smaller neighbourhoods  such that $\overline{\Omega}_1 \subset \Omega_2\subset \overline{\Omega}_2\subset\Omega$.  Let $S$ be a stratum of dimension $m$.  Choose a locally finite covering of $S$ by compact sets $K_i$ as above.  We may suppose that each $K_i$ either lies entirely inside $\Omega$,  lies entirely outside $\overline{\Omega}$, or does not intersect $\overline{\Omega}_2$.  We can now apply the above process to the $K_i$ that lie outside $\overline{\Omega}$, and to the $K_i$ that do not intersect $\overline{\Omega}_2$ to find a homotopy of $\psi$ such that it remains the same on $\Omega_1$ and becomes special on a neighbourhood of $S$.  We can do this for all the strata of dimension $m$, completing our induction.
\end{proof}

%%%%%%%%%%

\section{Strong equivariant homeomorphisms}  \label{s:continuous}

\noindent
As before, we let $ G$ be a reductive complex Lie group, and $X$ and $Y$ be Stein manifolds on which $G$ acts holomorphically, such that $X$ and $Y$ are locally $G$-biholomorphic over a common quotient $Q$.  Let $p:X \to Q$ and $r:Y\to Q$ be the quotient maps.  In the previous section we showed that in the generic case we can deform a strict $G$-diffeomorphism $X\to Y$ to one which is also special.  In this section we define the notion of a \textit{strong $G$-homeomorphism} $X\to Y$, and show, in the generic case, that it can be deformed to a special $G$-homeomorphism, that is, one locally of the form $x\mapsto\gamma(x)\cdot x$, where $\gamma: X\to G$ is continuous and $G$-equivariant, with $G$ acting on the target $G$ by conjugation.  The proof is largely the same as in the previous section, aside from some technical lemmas.

Let $H$ be a reductive subgroup of $G$ and let $W$ be an $H$-module.  Let $B$ be an $H$-saturated neighbourhood of $0\in W$.  Let $T_W=G\times^HW$ and $T_B=G\times^HB$.  There is a $G$-stable finite-dimensional subspace $V\subset \O_\mathrm{alg}(T_W)$ which generates $\O_\mathrm{alg}(T_W)$, and also generates the $G$-finite elements of $\O(T_B)$ as a module over $\O(T_B)^G\cong \O(B)^H$.  We may think of the elements of $V$ as $G$-equivariant maps from $T_W$ to $V^*$ sending $[g,w]\in T_W$ to the element of $V^*$ whose value at $F\in V$ is $F([g,w])$.  Let $\{F_i\}$ be a basis of $V$.  We say that a $G$-equivariant homeomorphism $\psi:T_B\to T_B$ is \textit{strong} if it lies over the identity of $T_B\sl G$ and sends $F_i$ to $\sum_{j} a_{ij}F_j$, where the $a_{ij}$ are continuous functions on $T_B\sl G$.  It is easy to see that the definition does not depend on our choice of $V$ and the generators $F_i$.  Since the $F_i$ generate the coordinate ring of every fibre of $p:T_B\to Q_B=T_B\sl G$, $\psi$ restricts to an algebraic isomorphism of each fibre of $p$.  

Now let $\psi:X\to Y$ be a $G$-homeomorphism over $Q$.  Let $(U_i)$ be an open cover of $Q$ such that there are $G$-biholomorphisms $\phi_i: p\inv(U_i)\to r\inv(U_i)\cong G\times^{H_i}B_i$, where $B_i$ is an $H_i$-saturated neighbourhood of $0\in W_i$, $H_i$ is a reductive subgroup of $G$, and $W_i$ is an $H_i$-module.  We say that $\psi$ is \textit{strong} if each $\phi_i\inv\circ\psi:G\times^{H_i}B_i\to G\times^{H_i}B_i$ is strong.  Again, the definition does not depend on the choices made.

We may think of a strong $G$-homeomorphism $X\to X$ as a particular type of continuous family of algebraic (equivalently, holomorphic) $G$-isomorphisms of the fibres of $p$.  If the group scheme corresponding to $X$, as constructed in \cite{Kraft-Schwarz}, existed, then the strong $G$-homeomorphisms $X\to X$ would be the global continuous sections of the group scheme.  Note that a strong $G$-diffeomorphism is also strict.

Our main result in this section is the following counterpart to Theorem \ref{t:strict-diffeomorphisms} and Corollary \ref{c:strict-diffeomorphisms}.

\begin{theorem}  \label{t:strongly}
Let $G$ be a reductive complex Lie group acting holomorphically and generically on Stein manifolds $X$ and $Y$, which are locally $G$-biholomorphic over a common quotient.  Every strong $G$-homeomorphism $X\to Y$ is homotopic, via a continuous path of strong $G$-homeo\-morphisms, to a special (and strong) $G$-homeomorphism.
 
Hence, if $X$ and $Y$ are strongly $G$-homeomorphic, then they are $G$-biholomorphic.
\end{theorem}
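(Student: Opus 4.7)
The plan is to adapt the proof of Theorem~\ref{t:strict-diffeomorphisms} to the continuous-but-fibrewise-algebraic setting of strong $G$-homeomorphisms. The overall architecture remains the same: a local dilation homotopy in a slice tube, a cutoff to localize it, and an induction on the Luna strata of $Q$ in increasing order of dimension.

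First I will prove the local deformation lemma, the continuous counterpart of Lemma~\ref{l:parameters}. Given a strong $G$-homeomorphism $\psi$ of $C \times T_B$, where $T_B = G \times^H B$ and the $H$-action on $W$ is generic, I set $\phi_t = \alpha_t\inv \circ \psi \circ \alpha_t$ for $t \in (0, 1]$, with $\alpha_t([g,w]) = [g, tw]$. Using $F_i \circ \alpha_t = t^{d_i} F_i$ and $\psi^* F_i = \sum_j a_{ij}(c, z) F_j$ with $a_{ij}$ continuous on $(C \times T_B)\sl G$, one computes
\[ \phi_t^* F_i = \sum_j t^{d_j - d_i} a_{ij}(c, \alpha_t(z)) F_j, \]
so $\phi_t$ is strong for every $t \in (0, 1]$.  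The main technical step is to show this family extends continuously to $t = 0$.  For $d_j \geq d_i$ the coefficient $t^{d_j - d_i} a_{ij}(c, \alpha_t(z))$ converges by continuity of $a_{ij}$ as $\alpha_t(z)$ tends to the image $q_0$ of the zero section.  For $d_j < d_i$ I exploit that $\psi^*$ preserves the ideals of all fibres and respects the multiplicative structure on $V$; these constraints force $a_{ij}$ to vanish at $q_0$ to the order required to absorb the blow-up of $t^{d_j - d_i}$.

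A direct manipulation using $\alpha_s \alpha_t\inv = \alpha_{s/t}$ gives $\alpha_s \phi_t \alpha_s\inv = \phi_{t/s}$, so, passing to the limit $t \to 0$, $\alpha_s \phi_0 \alpha_s\inv = \phi_0$.  This $\alpha$-equivariance, combined with the fibrewise algebraic structure, shows that $\phi_0$ is special in the generic case by the same bookkeeping as in Section~\ref{s:smooth}: writing $\phi_0([g, w]) = \gamma([g, w]) \cdot [g, w]$ on the principal stratum, $\alpha$-equivariance of $\phi_0$ yields $\gamma([e, sw]) = \gamma([e, w])$ for $s \in (0, 1]$ and principal $w$, so by continuity $\gamma([e, w]) \equiv g_0$ for a fixed $g_0 \in C_G(H)$; $G$-equivariance then gives $\gamma([g, w]) = g g_0 g\inv$, which is continuous on all of $T_B$.

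The cutoff construction of Corollary~\ref{c:phi'} and the stability statement Lemma~\ref{l:special-remains} transfer verbatim to the continuous setting, since their proofs use only continuity of the cutoff function $\tau$ and of $\gamma$.  With these local ingredients, the stratification induction concluding the proof of Theorem~\ref{t:strict-diffeomorphisms} carries over unchanged.  The main obstacle, and the only real departure from the smooth argument, is the existence of the limit $\phi_0$: the technical heart of the proof will be showing that the continuous data $(a_{ij})$ obey the vanishing prescriptions dictated by the algebra relations, so that the dilation conjugate family extends continuously across $t = 0$.
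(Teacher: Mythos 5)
Your overall architecture coincides with the paper's: the paper proves the continuous analogues of Lemma \ref{l:parameters}, Corollary \ref{c:phi'} and Lemma \ref{l:special-remains} and then repeats the stratification induction from the proof of Theorem \ref{t:strict-diffeomorphisms} verbatim, and you have correctly isolated the one genuinely new difficulty, namely that the dilation family extends across $t=0$ as a family of \emph{strong} $G$-homeomorphisms. But your treatment of that difficulty has a real gap. You propose to show that the given coefficients $a_{ij}$ ``vanish at $q_0$ to the order required to absorb the blow-up of $t^{d_j-d_i}$''. This is not the right statement and cannot be proved: the $F_j$ generate the $G$-finite functions as a module over the invariants but are not a free basis, so the $a_{ij}$ are determined only up to the module of relations; adding a relation $(R_j)$ to a row changes the individual $a_{ij}$ arbitrarily without changing $\psi^*F_i$ (the extra contribution $\sum_j t^{d_j-d_i}R_j(t\cdot z)F_j(x)=t^{-d_i}\sum_j R_j(t\cdot z)F_j(t\cdot x)$ vanishes identically, while its individual summands are uncontrolled). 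Moreover ``order of vanishing'' has no meaning for functions that are merely continuous. The correct assertion, which the paper proves as part (4) of Lemma \ref{l:fundamental}, is existential: there are continuous functions $b_{ij}(t,z)$, defined for all $t\in[0,1]$, with $\psi_t^*F_i=\sum_j b_{ij}(t,z)F_j$.

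The missing ingredient is Lemma \ref{l:gens}: since $W^H=0$, the algebra $\O_\mathrm{alg}(T_W)$ is generated over $\O_\mathrm{alg}(G/H)$ by the covariants of degree one. For the degree-one generators ($i\le k$) your displayed formula involves only nonnegative powers $t^{d_j-1}$, so continuity at $t=0$ is immediate and the limit is the normal derivative $\delta\psi$ along the zero section, whose complex-linearity follows because $\psi$ restricts to an algebraic isomorphism of the null fibre, whose Zariski tangent space at the origin is all of $W$ (again using $W^H=0$). For $i>k$ one does not manipulate the $a_{ij}$ at all: one writes $F_i=\sum_s f_sQ_s(F_1,\dots,F_k)$ with $Q_s$ homogeneous of degree $d_i$ and $f_s\in\O_\mathrm{alg}(G/H)$, applies $\psi_t^*$, and re-collects terms; the potentially negative powers $t^{d_j-d_i}$ are then accompanied by invariant factors whose lowest-degree parts have degree at least $d_i-d_j$, and this is where the required decay actually comes from. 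Your identity $\alpha_s\phi_0\alpha_s\inv=\phi_0$ is a clean alternative to the paper's homogeneity computation for concluding that $\phi_0$ is special \emph{once the limit is known to exist and to be strong}, but without Lemma \ref{l:gens} (or an equivalent) that existence --- which you yourself flag as the technical heart --- remains unproved.
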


The key to the theorem is Lemma \ref{l:fundamental} below which says that strongly continuous maps are somewhat differentiable.

Let $H$, $W$, $B$, and the $F_i$ be as above.  Assume that $W^H=0$.  Let $p_1,\ldots,p_m$ be homogeneous generators of $\O_\mathrm{alg}(W)^H$.  We have a grading on $\O_\mathrm{alg}(T_W)=\bigoplus\O_\mathrm{alg}(T_W)_r$ corresponding to the $\C^*$-action on $W$.  Thus $F_i$ has degree $r$ if $F_i([g,tw])=t^rF_i([g,w])$.  The elements of degree zero are the pullbacks to $\O_\mathrm{alg}(T_W)$ of the elements of $\O_\mathrm{alg}(G/H)$.  Now $\O_\mathrm{alg}(T_W)$ is the direct sum of covariants of the various degrees $r\geq 0$.  Let $d_j$ be the degree of $F_j$.  We ignore the covariants of degree zero, that is, we do not put them in our list.  We also ignore the invariants, that is, we only consider $F_j$ which transform by a nontrivial representation $V_i$ of $G$.  We may assume that $d_j=1$ if and only if $j\leq k$.   
 
\begin{lemma}  \label{l:gens}
$\O_\mathrm{alg}(T_W)$ is generated by $\O_\mathrm{alg}(T_W)_1$ as an $\O_\mathrm{alg}(G/H)$-algebra.
\end{lemma}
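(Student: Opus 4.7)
My strategy is to realize $\O_\mathrm{alg}(T_W)$ as a symmetric algebra over $R := \O_\mathrm{alg}(G/H)$, at which point generation by degree-one elements is tautological.

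First, by Matsushima's theorem, $G/H$ is an affine algebraic variety since $H$ is reductive. Moreover, $\pi : T_W = G\times^H W\to G/H$ is a $G$-equivariant algebraic vector bundle with fibre $W$. Let $E$ denote the dual bundle $G\times^H W^*$; the locally free sheaf of sections of $E$ corresponds, on the affine variety $G/H$, to the finitely generated projective $R$-module
\[ P = \Gamma(G/H, E) = (\O_\mathrm{alg}(G)\otimes W^*)^H. \]
Using the principal $H$-bundle $G\to G/H$ to pull back and trivialise $E$, one checks that $P$ is precisely the degree-one component $\O_\mathrm{alg}(T_W)_1$ of the grading induced by the $\C^*$-action on $W$.

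Second, since $T_W$ is the total space of a vector bundle with dual $E$, its coordinate ring decomposes as
\[ \O_\mathrm{alg}(T_W) = \bigoplus_{r\geq 0} \Gamma(G/H, S^r E). \]
Because $G/H$ is affine and $E$ is locally free of finite rank, we have $\Gamma(G/H, S^r E) = S^r_R(P)$ for every $r \geq 0$, giving a graded $R$-algebra isomorphism $\O_\mathrm{alg}(T_W) \cong \mathrm{Sym}_R(P)$ that matches the $\C^*$-grading with the natural grading on the symmetric algebra. Since any symmetric algebra is, by construction, generated as an algebra over its coefficient ring by its degree-one part, the lemma follows.

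The only mildly technical point is the sheaf-module identification $\Gamma(G/H, S^r E) = S^r_R(P)$, which is routine for locally free sheaves of finite rank on an affine variety via the standard dictionary between quasi-coherent sheaves and modules. I do not expect any genuine obstacle; the heart of the argument is simply recognising that $T_W$ is a vector bundle over an affine base, so its ring of regular functions is a symmetric $R$-algebra on the module of fibrewise linear functions.
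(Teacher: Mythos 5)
Your proof is correct, and it takes a genuinely different route from the paper's. The paper argues concretely with the chosen generating covariants $F_i$: it observes that the restrictions of all the $F_j$ to a fibre $W_g$ generate $\O_\mathrm{alg}(W_g)$, that covariants of degree $\neq 1$ cannot contribute to the linear part (here the hypothesis $W^H=0$ is used to exclude degree-one \emph{invariants}, which the paper's list of $F_i$ omits), and hence that the finitely many degree-one covariants $F_1,\dots,F_k$ define a closed embedding $T_W\hookrightarrow G/H\times\C^k$ over $G/H$; generation by $\O_\mathrm{alg}(T_W)_1$ follows. You instead identify $\O_\mathrm{alg}(T_W)=\bigoplus_r(\O_\mathrm{alg}(G)\otimes S^rW^*)^H$ with the symmetric algebra $\mathrm{Sym}_R(P)$ of the projective $R$-module $P=\O_\mathrm{alg}(T_W)_1$ of sections of the dual bundle, using Matsushima to get $G/H$ affine and the sheaf--module dictionary. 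Your argument is cleaner and more structural, does not use $W^H=0$ at all, and proves the lemma exactly as stated; what it buys less directly is the sharper fact the paper actually exploits in Lemma \ref{l:fundamental}(4), namely that the \emph{specific} finitely many covariants $F_1,\dots,F_k$ already suffice. That refinement does follow from your version by a short degree argument (the degree-one part of any polynomial in the $F_i$ is an $\O_\mathrm{alg}(G/H)$-combination of the degree-one $F_i$, and $W^H=0$ ensures there are no degree-one invariants to account for), so nothing essential is lost, but it is worth noting that the hypothesis $W^H=0$, which your proof renders superfluous for the lemma itself, is still needed for the way the lemma is applied.
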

 
\begin{proof}
Let $W_g=\{[g,w]:w\in W\}$.  The restrictions of the $F_i$, $i\leq k$, must generate $W_g^*$,  since all the covariants $F_j$ restricted to $W_g$ generate $\O_\mathrm{alg}(W_g)$ and the covariants of higher degree obviously cannot help.  We are also using the assumption that $W^H=0$.  Thus the map $T_W\to G/H\times\C^k$ is an embedding, where the map to $\C^k$ comes from the $F_i$, $i\leq k$.  Thus $\O_\mathrm{alg}(T_W)$ is generated by $\O_\mathrm{alg}(T_W)_1$ over $\O_\mathrm{alg}(G/H)$.
\end{proof}
 
Let $\psi:T_B\to T_B$ be a strong $G$-homeomorphism.  We assume that $B$ is stable under multiplication by $t\in[0,1]$.  Let $(a_{ij}(z))$ be the $n\times n$ matrix of continuous functions such that $\psi^*F_i=\sum a_{ij}(z)F_j$, $z\in Q_B=T_B\sl G$.  Note that $\psi^*$ has to send the covariants corresponding to the $G$-module $V_i$ to the covariants of the same type.    
 
For $x\in T_B$, let $z$ denote $p(x)\in Q_B$, and let $x\mapsto t\cdot x$ denote the action of $t\in [0,1]$, that is, $t\cdot [g,w]=[g,tw]$.  Let $t\cdot z$ denote $p(t\cdot x)$.  Let $\psi_t(x)=t\inv\cdot \psi(t\cdot x)$, $x\in X$, $t\in[0,1]$.  As observed before, $\psi$ preserves the zero section $Z$ of $T_B$.  Thus $d\psi$ restricted to the Zariski tangent space at the origin of $\N(W_e)$ (which is $W_e$) gives us a complex-linear map of $W_e$ to $W_g$, where  $\psi$  sends $[e,0]$ to $[g,0]$.  We calculate this derivative in the usual way below.
 
\begin{lemma}  \label{l:fundamental}
In the above setting, the following hold.
\begin{enumerate}
\item  $(\psi_t^*F_i)(x)=\sum_j t^{d_j-d_i}a_{ij}(t\cdot z) F_j(x)$, $x\in T_B$. 
\item  The limit as $t\to 0$ of $\psi_t$ acts on the $F_i$, $i\leq k$, by the matrix $L$ with entries $a_{ij}(0)$, $i, j\leq k$.
\item  $\psi$ has a normal derivative $\delta\psi $ along the zero section of $T_B$, and $\delta\psi$  is a complex-linear $G$-vector bundle isomorphism of $T_W$ which preserves the invariants.
\item  There are continuous $b_{ij}(t,z)$ such that $(\psi_t^*F_i)(x)=\sum_j b_{ij}(t,z)F_j(x)$ for all $i$.
\item  If $W$ is generic, then $\psi_0=\delta\psi$ is special.
\end{enumerate}
\end{lemma}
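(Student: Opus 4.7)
The plan is to deduce (1)--(5) in order, with the formula in (1) underlying everything. For (1), I would substitute $\psi_t(x)=t^{-1}\cdot\psi(t\cdot x)$ into $F_i\circ\psi_t$ and apply the $\C^*$-homogeneity $F_i\circ\alpha_s=s^{d_i}F_i$ on both sides together with the strong decomposition $\psi^*F_i=\sum_j a_{ij}(z)F_j$, which yields the formula immediately. For (2), I would restrict (1) to $i\le k$ (so $d_i=1$): terms with $d_j=1$ converge to $a_{ij}(0)F_j(x)$ by continuity of $a_{ij}$ at $0$, while terms with $d_j\ge 2$ carry the factor $t^{d_j-1}\to 0$ together with locally bounded $a_{ij}(t\cdot z)$, so they vanish in the limit, giving the matrix $L$ as claimed.

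For (3), I would invoke Lemma \ref{l:gens}: the degree-$1$ covariants $F_i$, $i\le k$, together with $\O_\mathrm{alg}(G/H)$, embed $T_W$ as a closed subvariety of $G/H\times\C^k$ whose fibers over $G/H$ are linearly embedded in $\C^k$. Any fiberwise continuous bundle map of $T_W$ is thus determined by its effect on the $F_i$, $i\le k$, and on $G/H$; by (2) that effect is the complex-linear map $L$, so $\delta\psi$ exists as a well-defined complex-linear $G$-equivariant bundle map $T_W\to T_W$. Preservation of invariants I would verify by applying the substitution of (1) to a $\C^*$-homogeneous invariant $p_\ell$: since $\psi^*p_\ell=p_\ell$, a short computation gives $\psi_t^*p_\ell=p_\ell$ for every $t>0$, and so $\delta\psi^*p_\ell=p_\ell$ in the limit. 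The same pattern of conjugating by $\alpha_s$ shows that $\psi_0$ commutes with every $\alpha_s$, so $\delta\psi$ is $\C^*$-equivariant.

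For (4), which will be the main obstacle, the natural candidate from (1) is $b_{ij}(t,z)=t^{d_j-d_i}a_{ij}(t\cdot z)$, continuous on $(0,1]\times Q_B$ and extending continuously at $t=0$ whenever $d_j\ge d_i$. The trouble is the case $d_j<d_i$, where $t^{d_j-d_i}\to\infty$ as $t\to 0$. The strategy is to leverage the $\C^*$-equivariance of $\delta\psi$ from (3): since $\delta\psi^*F_i$ is $\C^*$-homogeneous of degree $d_i$, each coefficient $b_{ij}(0,z)$ in the decomposition $\psi_0^*F_i=\sum_j b_{ij}(0,z)F_j$ must be $\C^*$-homogeneous of degree $d_i-d_j$, forcing $b_{ij}(0,\cdot)$ to vanish at $0$ to order $d_i-d_j$ for $d_j<d_i$ and to be zero for $d_j>d_i$. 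The divergent contributions from $d_j<d_i$ in the naive candidate must then be cancellable by $\O(Q_B)$-linear syzygies among the $F_j$'s inside the $G$-isotypic component containing $F_i$. Choosing the basis $\{F_j\}$ of $V$ compatibly, within each isotypic component, with a minimal set of $\O(Q_B)$-module generators yields $b_{ij}(t,z)$ that are uniquely determined (or determined modulo a syzygy module that can be continuously split), and the uniform convergence $\psi_t\to\psi_0$ on compact subsets then forces continuity of $b_{ij}$ at $t=0$.

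For (5), in the generic case the $\delta\psi$ from (3) is a holomorphic $G$-biholomorphism of $T_W$ over the identity of $T_W\sl G$. I would apply Lemma \ref{l:g-biholomorphisms} to write it uniquely as $x\mapsto\gamma(x)\cdot x$ for a holomorphic $G$-equivariant map $\gamma:T_W\to G$, with $G$ acting on itself by conjugation. Hence $\psi_0=\delta\psi$ is special, as required.
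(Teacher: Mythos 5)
Parts (1), (2), (3) and (5) of your proposal follow the paper's argument essentially verbatim (for (3) the paper additionally checks that the convergence $\psi_t\to\delta\psi$ is locally uniform, via the continuity of the $a_{ij}$ at $0$, which is what actually justifies calling $\delta\psi$ a normal \emph{derivative}; your version should include that estimate, but it is the same computation). The genuine gap is in (4), which you yourself flag as the main obstacle. Your strategy — observe that the naive coefficients $t^{d_j-d_i}a_{ij}(t\cdot z)$ blow up for $d_j<d_i$, argue from the $\C^*$-homogeneity of $\delta\psi^*F_i$ that the limiting coefficients ``must'' vanish to order $d_i-d_j$, and conclude that the divergent contributions ``must be cancellable by syzygies'' which ``can be continuously split'' — is a description of what one would like to be true, not a proof. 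The $a_{ij}$ are only continuous, so an order-of-vanishing constraint on the limit at the single point $0\in Q_B$ gives no control on $t^{d_j-d_i}a_{ij}(t\cdot z)$ for $t>0$; the decomposition $\psi_t^*F_i=\sum_j b_{ij}F_j$ is not unique, so uniform convergence of $\psi_t^*F_i$ does not force convergence of any particular choice of coefficients; and no construction of a continuous splitting of the syzygy module is offered.

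The paper's mechanism for (4) is different and is the one place where Lemma \ref{l:gens} is used in an essential multiplicative way: for $i>k$ one writes $F_i=\sum_s f_s\,Q_s(F_1,\dots,F_k)$ with $Q_s$ homogeneous of degree $d_i$ and $f_s\in\O_\mathrm{alg}(G/H)$, applies $\psi_t^*$ to this \emph{product} expression using the already-controlled degree-one formulas $\psi_t^*F_\ell=\sum_j t^{d_j-1}a_{\ell j}(t\cdot z)F_j$, and then re-expands each monomial in the basis $\{F_j\}$ with coefficients that are invariants times elements of $\O_\mathrm{alg}(G/H)$. The degree bookkeeping shows that whenever $F_j$ with $d_j<d_i$ appears, it is multiplied by an invariant $P_{ijs}$ whose lowest-degree homogeneous part has degree at least $d_i-d_j$, so that $P'_{ijs}(t,z):=t^{d_i-d_j}P_{ijs}(t\cdot x)$ extends continuously to $t=0$; the offending factor $t^{d_j-d_i}$ is absorbed into $P'_{ijs}$ rather than cancelled against other terms. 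This replaces your hoped-for syzygy cancellation with an explicit choice of continuous coefficients $b_{ij}(t,z)=\sum_s f_{sj}\,a'_{ijs}(t\cdot z)\,P'_{ijs}(t,z)$. Without this (or an equivalent substitute), part (4) — and hence the analogue of Corollary \ref{c:phi'} needed for Theorem \ref{t:strongly} — is not established.
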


\begin{proof} As seen before, the $\psi_t$ lie over the identity of $Q_B$.  Now 
\begin{align*}
(\psi_t^*F_i)(x)&=F_i(t\inv\psi(t\cdot x))=t^{-d_i}(\psi^*F_i)(t\cdot x)=t^{-d_i}\sum_j a_{ij}(t\cdot z)F_j(t\cdot x) \\ &=\sum_j t^{d_j-d_i}a_{ij}(t\cdot z)F_j(x) 
\end{align*}
and we have (1).  Now let $i\leq k$, so that $d_i=1$.  Then
\[  \lim_{t\to 0}\psi_t^*F_i(x)=\lim_{t\to 0}\sum_j t^{d_j-1}a_{ij}(t\cdot z) F_j(x).  \]
Since $d_j\geq 1$, and $d_j>1$ for $j>k$, the limit is $\sum\limits_{j\leq k}a_{ij}(0)F_j(x)$. Thus the directional derivative of $\psi$ in the direction of $x$ exists and the derivative acting on the $F_i$ for $i\leq k$ is given by the matrix $L$.  

Next we prove differentiability.  Let $\delta\psi$ be the bundle map of $T_B$ given by the directional derivatives.  Choose a norm on the vector bundle $T_W$ and consider $x\in T_B$ of norm at most $\epsilon$ for some $\epsilon>0$.  We must show that $\lim\limits_{t\to 0} t\inv(\psi(tx)-\delta\psi(tx))=0$ locally uniformly for such $x$.  For $i\leq k$, we have 
\begin{align*}
\lim_{t\to 0}F_i(t\inv\psi(tx)-\delta\psi(tx))&=\lim_{t\to 0}(\psi_t^*F_i-\delta\psi^*F_i)(x) \\ &=\lim_{t\to 0}\sum_j t^{d_j-1}(a_{ij}(t\cdot z)-a_{ij}(0)) F_j(x).
\end{align*}
The last limit vanishes since $a_{ij}$ is continuous at $z=0$, and the vanishing is locally uniform in $x$.  Hence $\psi$ is differentiable in the normal direction and $(\delta\psi)^*$ acts on those $F_i$ of degree 1 by the matrix $L$.  Thus $\delta\psi$ coincides with $d\psi$ calculated on the Zariski tangent spaces of the null cones; hence it is complex-linear.  Since $\psi$ is $G$-equivariant, so is $\delta\psi$, and since the $\psi_t$ preserve the invariants, so does $\delta\psi$.

In order to prove (4), we note that by Lemma \ref{l:gens}, for $i>k$, $F_i=\sum_s f_s Q_s(F_1,\dots,F_k)$, where each $Q_s$ is homogeneous of degree $d_i$ and $f_s\in\O_\mathrm{alg}(G/H)$.  When we expand any $Q_s$, we get an  expression in the invariants times the generators $F_j$. If $F_j$ occurs, then since we have an expression starting in degree at least $d_i$, either $d_j>d_i$, or the coefficient of $F_j$ has to be an invariant with lowest-degree term of degree at least $d_i-d_j$.

Expanding  $\psi_t^*F_i$ using the   expression above for $F_i$, we obtain
\begin{align*}
(\psi_t^*F_i)(x)&=\sum_s f_sQ_s(\psi_t^*F_1,\dots,\psi_t^*F_k)(x) \\ &=\sum_s t^{-d_i}f_s Q_s\left(\sum_j a_{1j}(t\cdot z)F_j,\dots,\sum_j a_{kj}(t\cdot z) F_j\right)(t\cdot x).
\end{align*}
Hence 
\[  (\psi_t^*F_i)(x)=\sum_s\sum_jf_{sj} t^{d_j-d_i}a'_{ijs}(t\cdot z) P_{ijs}(t\cdot x) F_j(x),  \] 
where $a'_{ijs}$ is a polynomial in the $a_{pq}$, $P_{ijs}$ is an invariant, and $f_{sj}\in\O_\mathrm{alg}(G/H)$.  If $d_j\geq d_i$, then the coefficient of $F_j(x)$ has the desired form.  If $d_j<d_i$, then the lowest-degree monomial of $P_{ijs}$ has degree at least $d_i-d_j$.  Set $P'_{ijs}(t,z)=t^{d_i-d_j}P_{ijs}(t\cdot x)$. Then $P'_{ijs}$ is continuous on $\R\times Z$ and the coefficient of $F_j(x)$ in $(\psi_t^*F_i)(x)$ is $\sum_s f_{sj}a'_{ijs}(t\cdot z)P'_{ijs}(t, z)$ giving (4). Part (5) is clear.
\end{proof}

\begin{proof}[Proof of Theorem \ref{t:strongly}] 
Using the arguments of the previous section, we prove the analogue of Lemma  \ref{l:parameters} for continuous parameters.  Using (4) above, we establish the analogue of Corollary \ref{c:phi'}, where the homotopy is through strong $G$-homeomorphisms, and the continuous analogue of Lemma \ref{l:special-remains} is straightforward.  Now we can simply repeat the proof of Theorem \ref{t:strict-diffeomorphisms} using the analogues of the lemmas and corollary.
\end{proof}

%%%%%%%%%%

\end{document}